\newtheorem{lemma}{Lemma}
\newtheorem{theorem}[lemma]{Theorem}
\newtheorem{prop}[lemma]{Proposition}
\newtheorem{corollary}[lemma]{Corollary}
\newtheorem{remark}[lemma]{Remark}
\newcommand{\Prob}{\mathbb{P}}
\newcommand{\E}{\mathbb{E}}
\newcommand{\Var}{\mathrm{Var}}
\newcommand{\Cov}{\mathrm{Cov}}
\newcommand{\R}{\mathbb{R}}
\newcommand{\vol}{\textrm{Vol}}
\newcommand{\Ent}{\mathrm{Ent}}
\newcommand{\al}{\alpha}
\newcommand{\la}{\lambda}
\newcommand{\sk}[1]{\left\langle #1 \right\rangle}
\begin{document}
\title{Thin-shell concentration for convex measures}
\author{Matthieu Fradelizi,  Olivier Gu\'edon and Alain Pajor}

\newcommand\address{\noindent\leavevmode\noindent
{\small
Matthieu Fradelizi, Olivier Gu\'{e}don, Alain Pajor\\
Universit\'{e} Paris-Est \\
Laboratoire d'Analyse et Math\'{e}matiques Appliqu\'ees (UMR 8050) \\
UPEMLV, F-77454, Marne-la-Vall\'ee Cedex 2, France  \\
\texttt{\small olivier.guedon@u-pem.fr, matthieu.fradelizi@u-pem.fr,
alain.pajor@u-pem.fr}}}

\date{}

\maketitle

%% Classification and key words; note that the 2010 classification is used:

\renewcommand{\thefootnote}{}

\footnote{2010 \emph{Mathematics Subject Classification}: Primary 60E15, 60F10, 52A23; Secondary 52A40, 46B09.}

\footnote{\emph{Key words and phrases}: Isotropic, convex measure, concentration inequalities, thin-shell, large-deviation, KLS conjecture.}

\renewcommand{\thefootnote}{\arabic{footnote}}
\setcounter{footnote}{0}

%%%%%%%%

\begin{abstract}
We prove that for $s<0$, $s$-concave measures on $\R^n$ satisfy a
thin-shell concentration  similar to the log-concave case. It leads to a Berry-Esseen type estimate for most of their one dimensional marginal distributions. We also establish sharp reverse H\"older inequalities for $s$-concave measures.
\end{abstract}

\section{Introduction}

For any subsets $A,B\subset \R^n$, the Minkowski sum is defined by
\[ A + B=\{ a + b\,:\, a\in A, b\in B\}.
\]
Let $s \in[-\infty, 1]$. A measure $\mu$ on $\R^n$  is called $s$-concave
whenever
\[
\mu \left( (1-\la) A + \la B \right)
\ge
\left( (1-\la) \mu(A)^s + \la \mu(B)^s
\right)^{1/s},
\]
for every $\lambda\in [0,1]$ and  every compact subsets $A, B \subset \R^n$ such that 
$\mu(A) \mu(B)>0$.
When $s=0$, this inequality
should be read as
\[
\mu \left( (1-\la) A + \la B \right)
\ge
 \mu(A)^{1-\la} \mu(B)^{\la}
\]
 and it defines $\mu$ as a log-concave measure. When $s=-\infty$,  the measure is said to be convex and the inequality is replaced by
\[
\mu \left( (1-\la) A + \la B \right)
\ge
\min\left(\mu(A),\mu(B)
\right).
\]

Notice that the class of  $s$-concave measures on $\R^n$ is decreasing in $s$ so that any
$s$-concave measure is a convex measure.
Any $s$-concave measure
with $s\geq 0$ is log-concave and the thin-shell concentration for log-concave measures has been studied in \cite{Fleury, FGP, GM, Klartag1, Klartag2}.
 The purpose of this paper is to prove a thin-shell concentration for  $s$-concave measures in the case $s<0$, which we consider
from now on. By measure, we always mean probability measure.

The class of $s$-concave measures was introduced and studied in
\cite{Bo1,Bo2}, where a complete characterization was established. An $s$-concave
measure is supported on some convex subset of an affine subspace
where it has a density (see Section \ref{sec:tools} for more details).  When the support of an $s$-concave
measure $\mu$ generates the
whole space, we say that $\mu$ is full-dimensional.

A random vector with an $s$-concave
distribution is called $s$-concave.  The linear image of  an $s$-concave
random vector is also $s$-concave.  We say that a random vector is full-dimensional if its distribution is full-dimensional. It is known that any semi-norm
of an $s$-concave random vector with $s< 0$ has moments of all order $p\in (0,-1/s)$ (see \cite{Bo1} and \cite{AGLLOPT}). The Euclidean norm of an $s$-concave random vector $X$  has a finite moment of order 2 if and only if $s>-1/2$. Since we are interested in comparison of moments of the Euclidean norm
with the moment of order 2, we will always assume that $-1/2<s< 0$.

Let $n\geq 1$ be an integer. The Euclidean space $\R^n$ is equipped with its Euclidean norm
$|\,.\,|_2$  and its scalar product $\langle \, .\,,\,.\,\rangle$. Its unit sphere is denoted by $S^{n-1}$ and its unit ball by $B_2^n$.
We say that a random vector $X$ is isotropic if $\E X = 0$ and for every $\theta \in S^{n-1}$,
$\E \langle X,\theta \rangle^2 = 1$.  Observe that if $X$ is an $s$-concave full-dimensional random vector and   $-1/2<s$, we can always find an affine transformation $A$ such that $AX$ is isotropic.

 Let $p\in \R$ and  $X\in\R^n$ be a random vector. Assume that $|X|_2$ has finite moments of order $2$ and $p$ with the convention that $(\E |X|_2^p)^{1/p}= \exp(\E \ln |X|_2)$ for $p=0$. We define
 \[
\alpha_p(X):= \left | \frac{(\E |X|_2^p)^{1/p}}{(\E |X|_2^{2})^{1/2}}  - 1 \right| .
\]

Our main result is the following
\begin{theorem}
\label{th:moments}
Let $r > 2$.
Let $X\in\R^n$ be a full-dimensional $(-1/r)$-concave random vector.

If $X$  is isotropic, then  for any $p$ such that
$|p|\le c \min(r, n^{1/3})$, we have
\[
\alpha_p(X) \le   \frac{C\, |p-2|}{r} + \left(\frac{C \, |p-2|}{n^{1/3}}\right)^{3/5},
\]
where $C$ and $c$ are universal constants.

In the general case (when $X$ is not isotropic), let $A$ be an affine transformation such that $AX$ is full-dimensional and isotropic.  Then  for any $p\in\R$ such that
$|p|\le c \min\left(r, \frac{n^{1/3}}{\|A\|^{2/3} \|A^{-1}\|^{2/3}}\right)$, we have
\[
\alpha_p(X) 
 \le   \frac{C\, |p-2|}{r} + \left(\frac{C \, |p-2| (\|A\| \|A^{-1}\|)^{2/3}}{n^{1/3}}\right)^{3/5},
\]
where $C$ and $c$ are universal constants.
\end{theorem}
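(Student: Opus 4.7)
The strategy is to compare the moments $(\E|X|_2^p)^{1/p}$ with $(\E|X|_2^2)^{1/2}$ by analyzing the log-moment function $\Phi(p)=\log\E|X|_2^p$, and to exploit two complementary structural properties of $s$-concave measures that together produce the two error terms in the bound.

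The first ingredient is a sharp reverse Hölder inequality for $s$-concave measures (one of the results announced in the abstract). Following Borell's characterization of $s$-concave densities as $h=f^{-\alpha}$ with $f$ convex and $\alpha=n+r$, a Berwald-type argument adapted to this setting should yield that for any seminorm $N$ on $\R^n$ and any $|p|,|q|$ bounded by $c\cdot r$, the ratio $(\E N(X)^q)^{1/q}/(\E N(X)^p)^{1/p}$ differs from $1$ by at most $C|q-p|/r$. Specializing to $N(X)=|X|_2$ and interpolating between $p$ and $2$ accounts for the first term $C|p-2|/r$ in the theorem. This step is essentially one-dimensional, working with the distribution of $|X|_2$, and loses only the natural factor associated with moving through the moment range up to the critical threshold $1/|s|$.

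The second, more delicate, ingredient is a spherical thin-shell estimate for the Euclidean norm in the isotropic case. Using that $|X|_2$ inherits a form of $s$-concavity by polar integration, I would adapt the approach of Klartag and of Gu\'edon--Milman to derive a bound of the form $\Var(|X|_2)\lesssim n^{1/3}$, or equivalently a tail estimate $\Prob(||X|_2/\sqrt{n}-1|\ge t)\le C\exp(-c\,n^{1/2}\min(t^3,t))$, valid in the $s$-concave regime whenever $\alpha\ge n+2$. To convert such a tail bound into the $L^p$ statement, I would split the integral defining $\E|X|_2^p$ according to the thin-shell region $||X|_2-\sqrt{n}|\le t\sqrt{n}$ and its complement, control the outer piece using the reverse Hölder inequality of the previous step, and then optimize over $t$. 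The exponent $3/5$ in the statement is expected to arise from this optimization, balancing the Gaussian-like shell contribution against the polynomial outer tail, weighted by the moment factor $|p-2|$.

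I expect the main obstacle to be the thin-shell step. In the log-concave case the corresponding bound rests on localization and on strong regularity of the potential $\log h$; when $h=f^{-\alpha}$, these tools have to be recast, and the constants must be shown to be genuinely independent of $r$ (hence of $s$) so long as $\alpha\ge n+2$. Once the isotropic case is established, the general case reduces to it through the affine change of variables $Y=AX$: one has $|X|_2 \le \|A^{-1}\|\,|Y|_2$ and $|Y|_2 \le \|A\|\,|X|_2$, and carefully propagating $\|A\|\|A^{-1}\|$ through the isotropic estimates yields the second statement of the theorem, with the effective dimensional parameter $n/(\|A\|\|A^{-1}\|)$ replacing $n$ in the thin-shell term.
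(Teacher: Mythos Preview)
Your proposal has a genuine structural gap: you treat the two error terms as arising from two separate mechanisms (a one-dimensional reverse H\"older inequality on $|X|_2$, plus an independently established thin-shell variance bound), but neither half stands on its own as you describe it.

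For the first ingredient, the Berwald--Borell type inequality in the paper (Corollary~\ref{cor:Khinchine}) applies to \emph{concave} functionals $\phi$, in particular to linear forms $\langle\cdot,\theta\rangle_+$, not to the convex function $|X|_2$. The one-dimensional law of $|X|_2$ is not itself $(-1/r)$-concave, so there is no direct route from a scalar reverse H\"older inequality to a bound of the form $|(\E|X|^p)^{1/p}/(\E|X|^2)^{1/2}-1|\le C|p-2|/r$. In the paper the $1/r$ term does not come from a separate one-dimensional estimate: it emerges from the derivative bound $(\ref{eq:Beta2})$ on $p\mapsto \frac{1}{p}\log\frac{B(k+p,r-p)}{B(k,r)}$, which is tied to the log-concavity of $p\mapsto h_{k,p}(U)/B(k+p,r-p)$ (Corollary~\ref{cor:logconcavity}) and is inseparable from the $SO(n)$ analysis.

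For the second ingredient, saying ``adapt Klartag and Gu\'edon--Milman'' is precisely the content of the theorem, and the proposal does not identify the adaptations that make this work in the $s$-concave range. The paper's proof goes through the representation $(\ref{eq:so1})$, the log-Sobolev reverse H\"older inequality $(\ref{eq:reverseLogSob})$ on $SO(n)$, and crucially requires (i) the new log-concavity result Theorem~\ref{th:logconcave} for $(-1/\alpha)$-concave integrands, and (ii) a bound $L_{k,p}\le C\max(k,p)^2$ on the log-Lipschitz constant of $h_{k,p}$, obtained via the $s$-concave Ball bodies $K_a(w)$ and their comparison with $Z_q^+$-bodies (Propositions~\ref{prop:LogLip}--\ref{prop:rhop}, Lemma~\ref{lem:Zp}). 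Both terms $C/r$ and $(C/n^{1/3})^{3/5}$ then appear simultaneously in the bound for $\frac{d}{dp}\log(\E|X|^p)^{1/p}$ after optimizing over the auxiliary dimension $k$; they are not obtained and then added. Your suggestion to first prove a tail inequality and then integrate it back into moments reverses the actual logical order and would require the very moment control you are trying to establish. Finally, the non-isotropic reduction via the crude inequalities $|X|_2\le\|A^{-1}\|\,|AX|_2$ is too lossy: in the paper the factor $\|A\|\|A^{-1}\|$ enters only through the $q$-condition number $\rho_q(X)\le\rho_q(AX)\,\|A\|\|A^{-1}\|$, which feeds into $L_{k,p}$ and hence replaces $n$ by $n/(\|A\|\|A^{-1}\|)^2$ inside the log-Sobolev step.
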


We also show (see Remark \ref{final remark}) that for $r > n + \sqrt n$, the estimate of $\alpha_p(X)$ in Theorem \ref{th:moments} can be improved  and recovers the estimate of the log-concave case from \cite{GM}.
\smallskip

To present the connections between moment inequalities, concentration in a thin-shell and the Berry-Esseen theorem for one  dimensional marginals, let us introduce some notations.

Let $X\in \R^n$ be an isotropic random vector. Thus $\E |X|_2^2=n$. Define
$\varepsilon(X)$ to be the smallest number $\varepsilon>0$ such that
\begin{equation}
\label{equ:concentration}
\Prob\left(
\left|  \frac{ |X|_2}{\sqrt n}-1\right |\geq \varepsilon\right)
\leq \varepsilon.
\end{equation}
If $\varepsilon(X)=o(1)$ with respect to the dimension $n$, we say that $X$ is concentrated in a thin-shell.
This is the usual jargon of the subject. More rigorously, it suggests that we are considering a sequence of random vectors $(X_n)$ with $X_n\in\R^n$ and that  $\varepsilon(X_n)=o(1)$ as $n$ goes to $\infty$.
It was shown in \cite{ABP} (see also \cite{BV1, BHVV}) that if
an isotropic random vector $X$ uniformly distributed on a convex body in $\R^n$ is such that $\varepsilon(X)=o(1)$, then almost all  one dimensional marginal distributions of $X$ satisfy a Berry-Esseen theorem.
More generally, let $X\in\R^n$ be an isotropic random vector, it was proved in \cite{Bo2003} that
$$\sigma_{n-1}\left( \theta\in S^{n-1}\,:\,
\sup_{t\in \R}\left|
\Prob(\langle X,\theta\rangle\leq t)-\Phi(t)
\right|\geq 4\varepsilon(X)+\delta
\right)\leq 4n^{3/8}e^{-cn\delta^ 4},$$
where $\sigma_{n-1}$ denotes the rotation
invariant probability measure on the unit sphere $S^{n-1}$, $\Phi$  is the standard normal distribution function and
$c>0$ is a universal constant.
It is worth noticing that the result from \cite{Bo2003} does not assume log-concavity.
Assuming only that $X$ is isotropic, we get that if $\varepsilon(X)$ is $o(1)$ then almost all the one dimension marginal distributions of $X$ are approximately Gaussian.
The fact that indeed for all log-concave random vector $\varepsilon(X)=o(1)$ was proved later in \cite{Klartag1, FGP}  and the best
estimate at this date \cite{GM}  is that
$$\varepsilon(X) = O(n^{-1/6} \log n).$$

Now  let $p>2$ and assume that $X$ is isotropic and that $|X|_2$ has a finite  moment of order $p$.
Then  $\varepsilon(X)$  is $o(1)$ if and only if
 $\alpha_p(X)$  is $o(1)$, see Remark~\ref{remark: epsilon alpha} below.
 Hence Theorem \ref{th:moments} ensures that if $r\to+\infty$ with the dimension $n$ then any isotropic $(-1/r)$-concave random vector satisfies a  thin-shell concentration and therefore almost all its one dimensional marginals verify a Berry-Esseen theorem. As a matter of fact, this condition on $r$ is necessary. If $r$ is fixed and does not depend
on the dimension $n$, Proposition~\ref{prop:example}  gives an example of  an isotropic ($-1/r$)-concave random vector $X\in\R^n$
which does not satisfy a thin-shell concentration. Remark \ref{rem:asymptotic} also shows the asymptotic sharpness of Theorem \ref{th:moments}, since for this example,  for a fixed $p>2$, $\alpha_p(X)\geq C(p-2)/r$ for $r$ and $n$ large enough, where $C>0$ is a universal constant.

\smallskip

To build the proof of Theorem \ref{th:moments}, we need
to extend to the case of $s$-concave measures several tools coming from the study of log-concave measures. This is the purpose of Section \ref{sec:tools}. Some of them were already achieved by Bobkov \cite{Bobkov}, like analog of the Ball's bodies \cite{ball} in the $s$-concave setting. Some others were also noticed previously (see e.g. \cite{Bobkov}, \cite{AGLLOPT}) but not with the most accurate point of view. These new ingredients are analog to the results of  \cite{Bo3} in the log-concave setting and are at the heart of our proof.
As in the approach of  \cite{Fleury} or  \cite{GM}, an important ingredient  is  the log-Sobolev inequality on $SO(n)$.  
It follows e.g. from the work of Bakry and \'Emery \cite{BE} and the calculus of the Ricci curvature of $SO(n)$ (see \cite[Formula (F6)]{GZ} for example) that for any Lipschitz function $f : SO(n) \to \R^+$ (see sections \ref{sec:thinshell} and \ref{sec:appendix} for definitions)
\begin{equation}
\label{eq:LogSob-vrai}
\E(f(U)\log f(U)) - \E f(U) \log (\E f(U)) \le \frac{c}{n} \E \left( |\nabla \log f(U)|^2 f(U) \right),
\end{equation}
where $U$ is uniformly distributed on $SO(n)$.
It allows to get reverse H\"older inequalities (see inequality $(15)$ in \cite{Fleury}): for every $f : SO(n) \to \R$, let  $L$ be the log-Lipschitz constant of $f$ (that is the Lipschitz constant of $\log f$), then for every $q>r>0$,
\begin{equation}
\label{eq:reverseLogSob}
(\E |f(U)|^q)^{1/q} \le \exp\left( \frac{c \, L^2}{n} \,(q-r) \right) \ (\E |f(U)|^r)^{1/r},
\end{equation}
where $U$ is uniformly distributed on $SO(n)$.

Let $X$ be a $(-1/r)$-concave random vector in $\R^n$ with full-dimensional support 
and distributed according to a measure with a density function $w : \R^n \to \R_+$. 
For any linear subspace $E$, denote by $P_E$ the orthogonal projection onto $E$ and for any $x\in E$ denote by
\[
\pi_Ew(x)=\int_{x+E^\bot}w(y)dy
\]
the marginal of $w$ on $E$.
Given an integer $k$ between $1$ and $n$, a real number $p \in (-k, r)$, a linear subspace $E_0$ of $\R^n$ of dimension $k$ and $\theta_0 \in S(E_0)$, where
$S(E_0)$ denotes the unit sphere of $E_0$,
we define the function $h_{k,p} : SO(n) \rightarrow \R_+$ by
\begin{equation} \label{eq:h_p}
 h_{k,p}(u) := |S^{k-1}| \int_{0}^\infty t^{p+k-1} \pi_{u(E_0)} w(t u(\theta_0)) dt,
\end{equation}
for every $u\in SO(n)$, where $|S^{k-1}|$ denotes the area of the sphere.

Following the approach of \cite{Klartag2, Fleury}, we observe that
for any $p \in (-k,r)$
\begin{equation} \label{eq:so1}
\E |X|_2^p = \frac{\Gamma((p+n)/2)\Gamma(k/2)}{\Gamma(n/2)\Gamma((p+k)/2)} \E h_{k,p}(U),
\end{equation}
where $U$ is uniformly distributed on $SO(n)$.
In view of $(\ref{eq:so1})$ and the definition of $h_{k,p}$, we notice that it is  of importance to work with family of measures which are stable after taking the marginals and it is clear from the definition that 
for any subspace $E$,  if $X$ is $(-1/r)$-concave, then $P_E X$ is also $(-1/r)$-concave.

In the next section \ref{sec:tools}, we first introduce more notation and recall important facts concerning convex measures.  Then we give an example of an isotropic ($-1/r$)-concave random vector $X\in\R^n$ that does not satisfy a thin-shell concentration, when $r$ is fixed with respect to the dimension. Finally, we extend  to the case of $s$-concave measures several tools coming from the study of log-concave measures that will be essential in the proof of Theorem \ref{th:moments}.
Section \ref{sec:thinshell} is devoted to the proof of Theorem \ref{th:moments}. Some of the results of these two sections are  either classical or  variation of known results; their proofs are shifted to the appendix.

{\bf Acknowledgement}. We deeply and warmly thank the referee for his constructive comments on the first version of the paper. It  forced us to clarify several main points
and we hope that it  improved the presentation of the paper.

\section{Preliminary results for $s$-concave measures}
\label{sec:tools}

We first recall some properties of $s$-concave measures and their relation to $\beta$-concave functions.

The class of $s$-concave measures was introduced and studied in
\cite{Bo1,Bo2}, where the following complete characterization was established. An $s$-concave
measure $\mu$ on $\R^n$ is supported on some convex subset of an affine subspace
where it has a density.  When this subspace is the whole space, we say that $\mu$ is full-dimensional. In this case,
its density $w$ is $\beta$-concave with $\beta=s/(1-ns)$. Recall that 
a function $f:\R^n\to \R_+$  is called $\beta$-concave 
whenever
\[
f\left( (1-\la) x + \la y \right)
\ge
\left( (1-\la) f(x)^\beta + \la f(y)^\beta
\right)^{1/\beta}
\]
for every $\lambda\in [0,1]$ and  every $x,y \in \R^n$ such that $f(x)f(y)>0$, where the right hand side is replaced by $f(x)^{1-\la}f(y)^\la$ for $\beta =0$. Note that when $\beta<0$ which will be the case below, $\beta$-concavity means that  $f^{\beta}$ is convex on its convex support $\{f>0\}$. 

%A random vector with an $s$-concave
%distribution is called $s$-concave.  The linear image of  an $s$-concave
%random vector is also $s$-concave.  We say that a random vector is full-dimensional if its distribution is full-dimensional. 
%It is known that any semi-norm
%of an $s$-concave random vector with $s< 0$ has moments of all order $p\in (0,-1/s)$ (see \cite{Bo1} and \cite{AGLLOPT}). The Euclidean norm of an $s$-concave random vector $X$  has a finite moment of order 2 if and only if $s>-1/2$. Since we are interested in comparison of moments of the Euclidean norm
%with the moment of order 2, we will always assume that $-1/2<s< 0$.

We will use a similar language for probability measure, random vector and function which are related here as distribution, law of a random vector and density of probability. It is important to remember that when $X\in\R^n$ is $(-1/r)$-concave full-dimensional, then the result recalled above states that its distribution has a support that generates $\R^n$ and has a density which is $(-1/(n+r))$-concave.

Recall that for every $x>0$, $\Gamma(x)=\int_0^\infty u^{x-1}e^{-u}\,du$ and
for every $x,y>0$, $
B(x ,y) = \int_{0}^{1} u^{x-1} (1- u)^{y-1} du = \int_0^{+\infty} u^{x - 1} (u+1)^{-(x+y)} \,du.
$

The following inequality  of  Paley-Zygmund  type is well known.
%\medskip\hrule\medskip

 \begin{lemma}
 \label{lemma:Paley-Zygmund}
Let $2<p<s$. Let $Y$ be a non-negative random variable with finite $s$-moment. Then for every  $0\leq t\leq (\E Y^p)^{1/p}$ we have
\begin{equation*}\label{Paley-Zygmund}
 \Prob(Y\geq t)
 \geq \left(\frac{\E Y^p-t^p}{ (\E Y^s)^{p/s}}\right)^{s/(s-p)}.
 \end{equation*}
\end{lemma}

\begin{proof}
Using H\"older inequality, we have
$$\E Y^p= \E Y^p1_{Y<t}+ \E Y^p1_{Y\geq t}\leq t^p+(\E Y^s)^{p/s}\Prob(Y\geq t)^{1-p/s}.$$
Thus
$$
 \Prob(Y\geq t)\geq \left(\frac{\E Y^p-t^p}{ (\E Y^s)^{p/s}}\right)^{s/(s-p)}.$$
\end{proof}

\begin{prop}\label{prop:equiv epsilon}
Let $2<p<s$.  Let $X\in\R^n$ be an isotropic  random vector such that $|X|_2$ has a finite $s$-moment. Then
$$\min\left(\frac{\alpha_p(X)}{2}, \left(
\frac{p\alpha_p(X)/2}{(\alpha_s(X)+1)^{p}}\right)^{s/(s-p)}\right)\leq \varepsilon(X)\leq
\left((\alpha_p(X)+1)^p-1\right)^{1/3}  .$$
\end{prop}

\begin{proof}
Let $\varepsilon>0$.  Applying Lemma \ref{lemma:Paley-Zygmund} to $Y=|X|_2/(\E |X|_2^2)^{1/2}$,
$t=\varepsilon+1$ and noticing that
$\E Y^p= (\alpha_p(X)+1)^p$, $\E Y^s= (\alpha_s(X)+1)^s$,  we get that
$$\Prob\left( \frac{|X|_2}{(\E |X|_2^2)^{1/2}}\geq 1+\varepsilon\right)
\geq
  \left(\frac{(\alpha_p(X)+1)^p -(\varepsilon+1)^p}{(\alpha_s(X)+1)^{p}}\right)^{s/(s-p)}$$
whenever $0<\varepsilon\leq  \alpha_p(X)$.
Since  for $p\geq 1$ and $x\geq y\geq 1$, $x^p-y^p\geq p(x-y)$, we have
$$\Prob\left( \frac{|X|_2}{(\E |X|_2^2)^{1/2}}\geq 1+\varepsilon\right)\geq
  \left(
\frac{p(\alpha_p(X)-\varepsilon)}{(\alpha_s(X)+1)^{p}}\right)^{s/(s-p)}.$$
Therefore
$$\Prob\left( \frac{|X|_2}{(\E |X|_2^2)^{1/2}}\geq 1+\varepsilon\right)\geq
  \left(
\frac{p\alpha_p(X)/2}{(\alpha_s(X)+1)^{p}}\right)^{s/(s-p)}$$
whenever $0<\varepsilon\leq  \alpha_p(X)/2$.
The left-hand side inequality follows.

Since for $q\geq 1$, $|x-1|\leq |x^q-1|$ for every $x\geq 0$, Markov inequality gives
\[\Prob\left(
\left| \frac{|X|_2}{(\E |X|_2^2)^{1/2}}-1\right |\geq \varepsilon\right)
\leq \Prob\left(
\left| \frac{|X|_2^q}{(\E |X|_2^2)^{q/2}}-1\right|\geq \varepsilon\right)
\leq \frac{\E \left| \frac{|X|_2^q}{(\E |X|_2^2)^{q/2}}-1\right|^2}{\varepsilon^2}.
\]
To conclude the right-hand side inequality, take $q=p/2$ and observe that
$$\E \left| \frac{|X|_2^q}{(\E |X|_2^2)^{q/2}}-1\right|^2=(\alpha_{2q}(X)+1)^{2q}+1-2(\alpha_q(X)+1)^q\leq (\alpha_{2q}(X)+1)^{2q}-1.$$
\end{proof}

\begin{remark}\label{remark: epsilon alpha} Let $2<p<s$.  Let $X\in\R^n$ be an isotropic  random vector such that $|X|_2$ has a finite $s$-moment. Proposition \ref{prop:equiv epsilon} shows that
$\varepsilon(X)$  is $o(1)$ if and only if
$\alpha_p(X)$  is $o(1)$ when $n\to\infty$.
\end{remark}

Now we estimate $\varepsilon(X)$ for an example which shows that
an isotropic ($-1/r$)-concave random vector $X\in\R^n$
may not satisfy a thin-shell concentration. In the proposition below, the notation $\varliminf$ refers to the limit inferior.

\begin{prop}\label{prop:example}
Let $r>2$. There exists a sequence $(X_n)_n$ of isotropic ($-1/r$)-concave random vectors $X_n\in\R^n$ such that
$$\varliminf_{n\to\infty}\varepsilon(X_n)\geq c(r)>0,$$
where $c(r)>0$ depends only on $r$. \end{prop}
\begin{proof}
Let $r>2$ and $2<p<r$ and
let $X_n\in\R^n$ be an isotropic random vector with density
$$f_{n,r}(x)=\frac{c_1}{(1+c_2 |x|_2)^{r+n}}, $$
where $c_1$ and $c_2$ are normalization factors.
From \cite{Bo1,Bo2}, such a random vector is $(-1/r)$-concave.
An immediate computation gives that
$$
\frac{(\E|X_n|_2^p)^{1/p}}{(\E|X_n|_2^2)^{1/2}}=
\left(\frac{B(n+p,r-p)}{B(n,r)}\right)^{1/p}
\left(\frac{B(n+2,r-2)}{B(n,r)}\right)^{-1/2}.$$
For fixed $r$ and $2<p<r$, we have
\begin{equation}
\label{equ:moment example}
\lim_{n \to + \infty}
\frac{(\E|X_n|_2^p)^{1/p}}{(\E|X_n|_2^2)^{1/2}} =
\left(\frac{\Gamma(r-p)}{\Gamma(r)}\right)^{1/p}
\left(\frac{\Gamma(r-2)}{\Gamma(r)}\right)^{-1/2}
\end{equation}
and by the strict log-convexity of the Gamma function, we have
\[
\lim_{n \to + \infty}\left(\alpha_p(X_n) +1\right)=\lim_{n \to + \infty}\frac{(\E|X_n|_2^p)^{1/p}}{(\E|X_n|_2^2)^{1/2}}
>1.
\]
As a consequence for any $2<p<r$, $\lim_{n \to + \infty}\alpha_p(X_n)>0.$

Now let $2<p<s<r$. From  Proposition \ref{prop:equiv epsilon}, we get
\begin{equation}
\label{lowerboundepsilon}
\varliminf_{n \to + \infty} \varepsilon(X_n)\geq
\lim_{n \to\infty}
\min\left(\frac{\alpha_p(X_n)}{2},\left(\frac{p\alpha_p(X_n)/2}{(\alpha_s(X_n)+1)^{p}}\right)^{s/(s-p)}\right)>0.
\end{equation}
Choose  $p=(2+r)/2$ and $s=(p+r)/2$ for which  $2 < p < s<r$
and note that
the right hand side term in equation (\ref{lowerboundepsilon}) depends only on $r$.
This concludes the proof.

\end{proof}

\begin{remark}\label{rem:asymptotic}
Let $2<p<r$ and let $r\to\infty$. Applying  Stirling formula in  (\ref{equ:moment example}) when $r\to\infty$, a calculation gives that
\[
\lim_{r \to \infty} r\lim_{n \to \infty} \alpha_p(X_n)=(p-2)/2.
\]
This asymptotic estimate  shows that for a fixed $p>2$ and $r$ and $n$ large enough, then
$\alpha_p(X_n)\geq C(p-2)/r$ where $C>0$ is a universal constant. This proves  the sharpness of Theorem \ref{th:moments} under these conditions.
\end{remark}

We now prove some inequalities for $s$-concave measures that will be useful tools in the next section.

\begin{theorem}
\label{th:logconcave}
(1) Let $f:[0,\infty) \to [0,\infty)$ be a measurable function such that $\|f\|_\infty>0$. Then
\[ p\mapsto \left(\int_0^\infty pt^{p-1} f(t)\,dt   /  \|f\|_\infty\right)^{1/p}\]
is non-decreasing on its domain of definition.

(2) Let $\al >0$ and $f:[0,\infty) \to [0,\infty)$ be $(-1/\al)$-concave, continuous and integrable. Define $H_f:[0,\al) \to \R_+$ by
\[
    H_f(p) =  \left\{  \begin{array}{ll}
    \displaystyle \frac{1}{B(p, \al -p)} \,  \int_{0}^{+\infty} t^{p-1} f(t) dt    &\text{for}\ 0<p<\alpha
    \\
    \\
    f(0) & \text{for}\  p=0.  \end{array} \right.
\]
Then $H_f$ is log-concave on $[0, \al)$.
\end{theorem}
The proof of the first part may be treated as in Lemma 2.1 in \cite{MP} and the proof  of the second part is identical to the well known $(1/n)$-concave case \cite{Bo3}. We postpone the proof of Theorem \ref{th:logconcave} to the appendix.

We present several consequences of this result such as  some reverse H\"older inequalities with sharp constants in the spirit of Borell's \cite{Bo3} and
 Berwald's  \cite{Berwald} inequalities.

\begin{corollary}
\label{cor:Khinchine}
Let $r>0$ and $\mu$ be a $(-1/r)$-concave measure on $\R^n$. Let  $\phi : \R^n \to \R_+$ such that $\{\phi>0\}$ is convex and $\phi$ is concave on $\{\phi>0\}$. Then the function
\[
    p\mapsto \left\{  \begin{array}{ll}
    \displaystyle\frac{1}{pB(p, r -p)} \, \int \phi(x)^p d\mu(x)   &\text{for}\ 0<p<r
    \\
    \\
    \mu(\{\phi>0\}) & \text{for}\  p=0  \end{array} \right.
\]
is log-concave on $[0, r)$.

Moreover, if  $\mu(\{\phi>0\})>0$ then
for any $0 < p \leq q < r$,
\[
    \left(  \int_{\R^n} \phi(x)^q \frac{ d\mu(x)}{\mu(\{\phi>0\})}    \right)^{1/q}
    \leq
    \frac{\left(q B(q, r-q)\right)^{1/q}}{\left(p B(p, r-p)\right)^{1/p}} \left( \int_{\R^n} \phi(x)^p  \frac{ d\mu(x)}{\mu(\{\phi>0\})}    \right)^{1/p} .
\]
\end{corollary}

\begin{proof}
By the concavity of $\phi$,
for every $u,v \ge 0$ and every $\la \in [0,1]$
 \[
(1-\la) \{   \phi > u \} + \la \{  \phi > v \}
\subset
\{   \phi > (1-\la)u + \la v \}.
 \]
By the $(-1/r)$-concavity of $\mu$, the function $f(t) = \mu(\{ \phi > t\})$ is $(-1/r)$-concave and it is clearly continuous on $\R_+$.  Observe by Fubini that for any $p > 0$,
\[
\int_{\R^n} \phi(x)^p d \mu(x) = \int_0^{+\infty} p t^{p-1} f(t) dt.
\]
The result follows from the part (2) of Theorem \ref{th:logconcave}. The moreover part follows from the log-concavity since then  $p \mapsto (H_f(p)/f(0))^{1/p}$ is a non-increasing function. \end{proof}
The second corollary concerns the function $h_{k,p}$ defined in (\ref{eq:h_p}).
\begin{corollary}
\label{cor:logconcavity}
Let $r>0$ and $u\in SO(n)$. For any $(-1/(r+n))$-concave function $w : \R^n \to \R_+$  and any subspace $E_0$ of dimension $k\le n$, the function
\[
p \mapsto
\left\{
\begin{array}{ll}
\displaystyle
\frac{ h_{k,p}(u)  }{B(p+k, r -p )  } &\ \text{for}\ p  > -k +1
\\
\\
|S^{k-1}| \pi_{u(E_0)} w(0)&\  \text{for}\  p = - k + 1
\end{array}
\right.
\]
is log-concave on $[-k + 1, r)$.
\end{corollary}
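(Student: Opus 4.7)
The plan is a one-dimensional reduction that pushes everything onto Theorem \ref{th:logconcave}. First I would fix $U \in SO(n)$, set $F := U(E_0)$ and $\theta := U(\theta_0) \in F \cap S^{n-1}$, and exploit the stability of $s$-concavity under marginals recalled just before the corollary: since $w$ is $(-1/(r+n))$-concave on $\R^n$, its marginal $\pi_F w$ is $(-1/(r+k))$-concave on $F$. Because $s$-concavity of a function is inherited by its restriction to any affine subspace, the radial profile
\[
g(t) := \pi_F w(t\theta), \qquad t \ge 0,
\]
would then be $(-1/(r+k))$-concave on $[0, \infty)$ and integrable (since $\pi_F w$ is).

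Next I would apply Theorem \ref{th:logconcave} to $g$ with parameter $\alpha := r+k$. This yields log-concavity on $[0, r+k)$ of
\[
q \longmapsto H_g(q) = \frac{1}{B(q, r+k-q)} \int_0^\infty t^{q-1} g(t) \, dt, \qquad H_g(0) = g(0).
\]
The reparametrization $q = p+k$ (so that $r+k-q = r-p$) transports the domain $[0, r+k)$ onto $[-k, r)$, and the definition \eqref{eq:h_p} reads
\[
\vol(S^{k-1}) \int_0^\infty t^{p+k-1} g(t) \, dt = h_{k,p}(U),
\]
whence
\[
\vol(S^{k-1}) \, H_g(p+k) = \frac{h_{k,p}(U)}{B(p+k, r-p)} \qquad \text{for } p > -k,
\]
with boundary value $\vol(S^{k-1}) H_g(0) = \vol(S^{k-1}) \pi_{U(E_0)} w(0)$ attained at $p = -k$. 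Since log-concavity is preserved under affine reparametrization of the variable and under multiplication by the positive constant $\vol(S^{k-1})$, the function displayed in the statement would be log-concave on $[-k, r)$, which contains the claimed interval $[-k+1, r)$.

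There is no real obstacle beyond this bookkeeping: Theorem \ref{th:logconcave} has absorbed all the analytic content, and the only conceptual step is the polar-coordinate identification that rewrites the definition of $h_{k,p}$ as the Borell-type moment functional to which Theorem \ref{th:logconcave} applies. The stability of $(-1/\alpha)$-concavity under taking marginals on affine subspaces and under restriction to lines is classical (Borell \cite{Bo1,Bo2}) and is the only ingredient to be cited beyond the theorem itself.
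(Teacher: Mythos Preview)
Your argument is correct and is exactly the paper's approach spelled out in full: the paper's proof is the two-line observation that $t\mapsto \pi_{U(E_0)}w(tU(\theta_0))$ is $(-1/(r+k))$-concave and that Theorem~\ref{th:logconcave} then applies, which is precisely your reduction with $g$ and $\alpha=r+k$ followed by the shift $q=p+k$. Your remark that the construction actually yields log-concavity on the larger interval $[-k,r)$ (with boundary value at $p=-k$ rather than $p=-k+1$) is also accurate.
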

\begin{proof}
Since $w$ is $(-1/(r+n))$-concave, we note that $t \mapsto \pi_{U(E_0)} w(t u(\theta_0))$ is $(-1/(r+k))$-concave and it is clearly continuous on $\R_+$.  Theorem \ref{th:logconcave} proves the result.
\end{proof}

We finish with some geometric properties of a family of  bodies introduced by K. Ball in \cite{ball} in the log-concave case.
\begin{corollary}
\label{cor:geoKr}
Let $\alpha>0$. Let $w: \R^n \to \R_+$ be a $(-1/\al)$-concave function such that $w(0) > 0$. For  $0<a <\alpha$ let
\[
K_a(w) = \left\{ x\in\R^n ; \  a \int_0^{+\infty} t^{a-1} w(tx) dt   \ge w(0)
\right\}.
\]
Then for any $0 < a \le b < \al$
\[
\left(\frac{w(0)}{\|w\|_{\infty}}\right)^{\frac{1}{a} - \frac{1}{b}} K_a(w) \subset K_b(w) \subset \frac{(bB(b, \al -b))^{1/b}}{(aB(a, \al - a))^{1/a}} \ K_a(w).
\]
\end{corollary}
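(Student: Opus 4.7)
The plan is to parametrize $K_p(w)$ by its radial function. For $\theta \in S^{n-1}$, set $f_\theta(t) := w(t\theta)$, which is $-1/\alpha$-concave on $[0,\infty)$ with $f_\theta(0) = w(0)$, and $A_\theta(p) := \int_0^\infty t^{p-1} f_\theta(t)\,dt$. A change of variable $u = t|x|$ in the defining integral shows that $x = r\theta \in K_p(w)$ iff $r^p \le pA_\theta(p)/w(0)$, so $K_p(w)$ is a star body with radial function $\rho_p(\theta) = (pA_\theta(p)/w(0))^{1/p}$. Both inclusions then reduce to pointwise monotonicity of $p$-dependent quantities at fixed $\theta$.

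For the right inclusion, apply Theorem \ref{th:logconcave} to $f_\theta$: the function $H_{f_\theta}(p) = A_\theta(p)/B(p,\alpha-p)$ is log-concave on $[0,\alpha)$ with $H_{f_\theta}(0) = w(0)$. Since secant slopes of a concave function anchored at $0$ are non-increasing, $p \mapsto (H_{f_\theta}(p)/w(0))^{1/p}$ is non-increasing. Writing $\rho_p(\theta) = (pB(p,\alpha-p))^{1/p} (H_{f_\theta}(p)/w(0))^{1/p}$ gives immediately $\rho_b(\theta) \le \frac{(bB(b,\alpha-b))^{1/b}}{(aB(a,\alpha-a))^{1/a}}\rho_a(\theta)$.

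For the left inclusion, the plan is to prove the one-dimensional statement that for any non-negative measurable $f$ on $[0,\infty)$ with $\|f\|_\infty = M$ and finite relevant moments, $p \mapsto (pA_f(p)/M)^{1/p}$ is non-decreasing. Given $a < b$, set $T := (aA_f(a)/M)^{1/a}$ and compare $f$ to the extremal indicator $g := M\cdot\mathbf{1}_{[0,T]}$, for which $A_g(a) = A_f(a)$ by construction, i.e.
\[
\int_T^\infty t^{a-1} f(t)\,dt = \int_0^T t^{a-1}(M - f(t))\,dt.
\]
Using $t^{b-a} \ge T^{b-a}$ on $[T,\infty)$ and $t^{b-a} \le T^{b-a}$ on $[0,T]$ upgrades this equality to $A_f(b) \ge A_g(b) = MT^b/b$, which is exactly $(bA_f(b)/M)^{1/b} \ge T = (aA_f(a)/M)^{1/a}$. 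Applying this to $f_\theta$ with $M = \|f_\theta\|_\infty$ and combining with the observation that $p\mapsto(\|f_\theta\|_\infty/\|w\|_\infty)^{1/p}$ is itself non-decreasing in $p$ (since its base is $\le 1$), I get that $(pA_\theta(p)/\|w\|_\infty)^{1/p}$ is non-decreasing in $p$. Rewriting $\rho_p(\theta) = (\|w\|_\infty/w(0))^{1/p}(pA_\theta(p)/\|w\|_\infty)^{1/p}$ and using that $(\|w\|_\infty/w(0))^{1/p}$ is non-increasing in $p$ then produces exactly the contraction factor $(w(0)/\|w\|_\infty)^{1/a-1/b}$ claimed in the statement.

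The main obstacle is the left inclusion: Theorem \ref{th:logconcave} only supplies the one-sided control used for the right inclusion, and one needs an independent upper bound on $A_f(p)$ calibrated by $\|f\|_\infty$. The indicator comparison $f \leftrightarrow M\cdot\mathbf{1}_{[0,T]}$ supplies precisely that bound; it is worth noting that this moment-comparison step is really a general property of bounded non-negative functions and does not itself use the $-1/\alpha$-concavity of $w$.
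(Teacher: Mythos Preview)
Your proof is correct. The right inclusion matches the paper's argument exactly: apply Theorem~\ref{th:logconcave} to $f_\theta(t)=w(t\theta)/w(0)$ and use that $p\mapsto (H_{f_\theta}(p)/H_{f_\theta}(0))^{1/p}$ is non-increasing.

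For the left inclusion your route genuinely differs from the paper's. The paper does not argue in detail: it says the inclusion follows from H\"older's inequality in the even case (where $f_\theta$ is non-increasing, so the layer-cake formula $p\int_0^\infty t^{p-1}f_\theta(t)\,dt=\int_0^{\|f_\theta\|_\infty} r(s)^p\,ds$ reduces the claim to monotonicity of $L^p$-norms on a probability space), and for the non-even case it cites Lemma~2.1 of \cite{MP}. You instead prove directly that $p\mapsto \bigl(p\int_0^\infty t^{p-1}f(t)\,dt/\|f\|_\infty\bigr)^{1/p}$ is non-decreasing for \emph{any} bounded non-negative $f$, by comparing $f$ with the extremal $M\cdot\mathbf{1}_{[0,T]}$ having the same $a$-th moment and then upgrading the equality via $t^{b-a}\gtrless T^{b-a}$. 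This rearrangement-style argument handles the even and non-even cases uniformly, does not use the $-1/\alpha$-concavity of $w$ at all, and removes the need for the external reference; the paper's H\"older route is a touch quicker when the symmetry is available, but your argument is fully self-contained.
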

\begin{proof} Notice that the sets $K_a$ are star-shaped with respect to the origin,
that is  for every $x\in K_a$  and every $\lambda \in [0,1]$, $\lambda x\in K_a$. The radial function of $K_a$ is
\[
\rho_{K_a}(x):=\sup\{r \,:\, rx\in K_a\}=\left(a\int_0^{+\infty}t^{a-1}\frac{w(tx)}{w(0)}dt\right)^\frac{1}{a}.
\]
For any $x \in \R^n$,
let $f$ be the continuous $(-1/\al)$-concave function defined on $\R^+$ by $f(t) = w(tx)/w(0)$. By (1) of Theorem \ref{th:logconcave}, the function $a\mapsto \left(\int_0^{+\infty}t^{a-1}\frac{f(t)}{\|f\|_\infty}dt\right)^\frac{1}{a}$ is non-decreasing. The left hand side inclusion follows. Moreover, from (2) of Theorem  \ref{th:logconcave}, the function
$H_f: [0,\al) \to \R_+$ is log-concave on $[0, \al)$ with $H_f(0)= 1$. For $0 < a \le b < \al$, we have thus
$H_f(b)^{1/b} \le H_f(a)^{1/a}$. The right hand side inclusion follows.
\end{proof}

\section{Thin shell for convex measures}
\label{sec:thinshell}
The purpose of this section is to prove Theorem \ref{th:moments}. We follow the strategy of the log-concave case initiated in \cite{Klartag1, FGP, Klartag2} and further developed in \cite{Fleury, GM}.
\\
The support function $h_K$ of a non-empty compact set $K\subset\R^n$ is defined by
$$\forall\theta\in\R^n,\quad h_K(\theta)=\sup_{x\in K} \sk{x, \theta}.$$
To any random vector $X$ in $\R^n$ and any $p\ge 1$, we associate its $Z_p^+$-body defined by its support function
\[
\forall\theta\in\R^n,\quad h_{Z_p^+(X)}(\theta) = \left( \E \sk{X, \theta}_+^p \right)^{1/p}.
\]
When the distribution of $X$ has a density $g$, we write
$Z_p^+(g)=Z_p^+(X)$.
Extending a theorem of Ball  \cite{ball} for log-concave functions, Bobkov proved in \cite[Remark 2.6]{Bobkov} (see also \cite[Theorem\ 3.1]{Frad-Cor-Piv}) that if $w$ is  $(-1/(r+n))$-concave on $\R^n$ such that $w(0) > 0$, then
\begin{equation}
\label{def:K_a}
K_a(w)\ \text{is convex and compact for any}\ 0<a \le r+n-1.
\end{equation}
In the case of log-concave measures \cite{Paouris1, Paouris2, GM, GuedonPologne},
several relations between the $Z_p^+$ bodies and the convex sets $K_a$  are known. We need their analogue in the setting of $s$-concave measures for negative $s$.
We start with two technical lemmas. We postpone their proofs to the appendix.

\begin{lemma}
\label{lem:fonctionBeta}
Let  $x, y \ge 1$, then
\begin{equation}
\label{eq:Beta1}
c \, \frac{x}{x+y} \le \left( x B(x, y)\right)^{1/x}  \le C\, \frac{x}{x+y},
\end{equation}
where $c,C$ are positive universal constants.
%Let  $m + p \ge 1$ and $r-p \ge 1$, then
%\begin{equation}
%\label{eq:Beta1}
%c \, \frac{m+p}{m+r} \le \left( (m+p) B(m+p, r-p)\right)^{1/m+p}  \le C\, \frac{m+p}{m+r}
%\end{equation}
Moreover, for $k,r>1$, the extension by continuity at $0$ of the function $p\mapsto\frac{1}{p} \log \frac{B(k+p, r-p)}{B(k,r)}$ is differentiable on $[-(\frac{k-1}{2}),\frac{r-1}{2}]$ and satisfies
\begin{equation}
\label{eq:Beta2}
0 \le \frac{d}{dp} \left( \frac{1}{p} \log \frac{B(k+p, r-p)}{B(k,r)} \right) \le  \frac{1}{r-1} + \frac{1}{k-1} 
\end{equation}
for $p\in[-(\frac{k-1}{2}),\frac{r-1}{2}]$.\end{lemma}
In this paper, we use the notion of geometric distance between sets, defined for every compact subsets $K$, $L \subset \R^n$ containing $0$ in their interior by
\[
d(K, L) = \inf \{ t_2 / t_1 : t_1 L \subset K \subset t_2 L, t_1, t_2 > 0 \}.
\]
Let $n\ge1$, $r\ge2$ and $w$ be the $(-1/(r+n))$-concave density of a probability measure $\mu$ on $\R^n$. Then by Corollary \ref{cor:Khinchine} and Lemma \ref{lem:fonctionBeta}, for $1\le p\le q\le r-1$, one has
\[
Z_p^+(w)\subset Z_q^+(w)\subset c\frac{q}{p}\left(\inf_{\theta\in S^{n-1}} \mu\left(\{x: \langle x, \theta\rangle>0\}\right)\right)^{\frac{1}{q}-\frac{1}{p}}Z_p^+(w).
\]
Fix $\theta\in S^{n-1}$ and define $F(t)=\mu(\{x: \langle x, \theta\rangle\le t\})$, for $t\in\R$. One has $\int_\R tF'(t)dt=\int_{\R^n}\langle x,\theta \rangle w(x)dx=0$ and $F$ is $(-1/r)$-concave. Using Jensen's inequality, we get
\[
F(0)^{-\frac{1}{r}}=F\left(\int_\R tF'(t)dt\right)^{-\frac{1}{r}}\le\int_\R F(t)^{-\frac{1}{r}}F'(t)dt=\left[\frac{F(t)^{1-\frac{1}{r}}}{1-\frac{1}{r}}\right]_{-\infty}^{+\infty}=\frac{1}{1-\frac{1}{r}}.
\]
Hence $\mu(\{x: \langle x, \theta\rangle>0\})\ge\left(1-\frac{1}{r}\right)^r\ge 1/4$ for $r\ge 2$. We have recovered here in a simple way a Gr\"unbaum's type inequality for convex measures due to Bobkov \cite[Theorem~5.2]{Bobkov}. We deduce that, for $1\le p\le q\le r-1$,
\begin{equation}\label{eq:Zp-Zq}
Z_p^+(w)\subset Z_q^+(w)\subset C\frac{q}{p}Z_p^+(w) \quad{\rm and}\quad d(Z_p^+(w),Z_q^+(w))\le C\frac{q}{p}.
\end{equation}

\begin{lemma}
\label{lem:Zp}
Let $r$, $m$ and $p$ be such that $m$ is a positive integer, $r \ge m+1$ and $-\frac{m}{2}  \le p \le r-1$.
Let $F$ be a subspace of $\R^n$ of dimension $m$ and let $g$ be a $(-1/(r+m))$-concave density of a probability measure on $F$ such that $\int_F xg(x)dx=0$.
Then  we have
\[
d(K_{m+p}(g), Z_{\max(m,p)}^+(g)) \le c,
\]
where $c$ is a universal constant.
\end{lemma}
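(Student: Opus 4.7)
The plan is to reduce the claim to a comparison between a single convex body and its $Z_p^+$-body via polar integration, then to handle the regime $p \ge m$ by a Gr\"unbaum-Berwald argument, and the regime $p < m$ by Corollary~\ref{cor:geoKr}.

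\textbf{Reduction.} For $\theta \in S(F)$, polar integration in $F$ together with the defining identity $\int_0^{+\infty} r^{m+p-1}\,g(r\omega)\,dr = \frac{g(0)}{m+p}\rho_{K_{m+p}(g)}(\omega)^{m+p}$ gives
\[
h_{Z_p^+(g)}(\theta)^p
= \frac{g(0)}{m+p}\int_{S(F)} \sk{\omega,\theta}_+^p\,\rho_{K_{m+p}(g)}(\omega)^{m+p}\,d\omega
= g(0)\int_{K_{m+p}(g)}\sk{x,\theta}_+^p\,dx.
\]
Setting $L := K_{m+p}(g)$ (convex by Bobkov's theorem, since $m+p \le r+m-1$), $Z_p^+(g)$ is a positive scalar multiple of the $Z_p^+$-body of the uniform probability measure $\mathbf{1}_L/\vol(L)$ on $L$. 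Hence $d(Z_p^+(g),B_2(F)) = d(Z_p^+(\mathbf{1}_L/\vol(L)),B_2(F))$, and it remains to show $d(L,B_2(F)) \le c\,d(Z_q^+(\mathbf{1}_L/\vol(L)),B_2(F))$ for $q = \max(m,p)$.

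\textbf{Case $p \ge m$, so $q = p$.} The inclusion $Z_p^+(\mathbf{1}_L/\vol(L))\subset L$ is automatic from $\sk{x,\theta}_+\le h_L(\theta)$ on $L$. For the reverse inclusion $cL\subset Z_p^+(\mathbf{1}_L/\vol(L))$, a Gr\"unbaum-type halfspace inequality together with the Berwald reverse-H\"older estimate for the concave function $x\mapsto \sk{x,\theta}_+$ on the log-concave uniform density of $L$ yields, for $p \ge m$ and $L$ approximately centered (origin within a universal fraction of the inradius from the centroid of $L$), that $h_{Z_p^+(\mathbf{1}_L/\vol(L))}(\theta) \ge c\,h_L(\theta)$. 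Approximate centering of $L$ follows from the hypothesis that $g$ has barycenter at the origin: the identity $0 = \int_F x\,g(x)\,dx = \frac{g(0)}{m+1}\int_{S(F)}\omega\,\rho_{K_{m+1}(g)}(\omega)^{m+1}\,d\omega$ shows that $K_{m+1}(g)$ is exactly centered, and Corollary~\ref{cor:geoKr}, Lemma~\ref{lem:fonctionBeta}, and an $s$-concave analog of Fradelizi's inequality $g(0)\ge c^m\|g\|_\infty$ then constrain the displacement of the centroid of $L = K_{m+p}(g)$ from the origin.

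\textbf{Case $-m/2 \le p < m$, so $q = m$.} Apply Corollary~\ref{cor:geoKr} with $a = m+p \in [m/2, m]$ and $b = 2m$; since $\alpha = r+m \ge 2m+1$ and $a, b$ are of order $m$, both the Beta-function factor (by Lemma~\ref{lem:fonctionBeta}) and the $(g(0)/\|g\|_\infty)^{1/a-1/b}$ factor (by the Fradelizi-type estimate) are bounded universally. Hence $d(K_{m+p}(g),B_2(F))\le c\,d(K_{2m}(g),B_2(F))$, and applying the $p = m$ instance of Case~1 to $L = K_{2m}(g) = K_{m+m}(g)$ concludes the proof. The main obstacle is the Gr\"unbaum-Berwald step in Case~1 and its centering hypothesis: Corollary~\ref{cor:geoKr} alone does not tightly relate $K_{m+1}(g)$ to $K_{m+p}(g)$ when $p \gg m$, so propagating universal constants through the centering argument requires care.
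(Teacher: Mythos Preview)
Your plan matches the paper's almost exactly: same polar--integration reduction $h_{Z_p^+(g)}(\theta)^p = g(0)\int_{K_{m+p}(g)}\sk{x,\theta}_+^p\,dx$, same split into the two regimes $p\ge m$ and $-m/2\le p<m$, and the same use of Corollary~\ref{cor:geoKr} together with the Fradelizi--type bound $g(0)/\|g\|_\infty\ge c^m$ (from \cite{AGLLOPT}) in the second regime.

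The only substantive difference is in the execution of the case $p\ge m$. The paper does not separately invoke Gr\"unbaum and Berwald; instead it proves the clean two--sided estimate
\[
h_K(\theta)\ \ge\ \frac{h_{Z_q^+(K)}(\theta)}{\vol(K\cap H_\theta^+)^{1/q}}\ \ge\ \bigl(qB(q,m+1)\bigr)^{1/q}\,h_K(\theta)
\]
directly from the $1/m$--concave analogue of Theorem~\ref{th:logconcave} (Borell), by letting $q\to\infty$ in the monotone quantity $\bigl(G_f(q)/G_f(0)\bigr)^{1/q}$. For $p\ge m$ the factor $(pB(p,m+1))^{1/p}$ is universally bounded below, which is the whole point.

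Your self--flagged ``main obstacle'' is in fact not one. Both arguments leave the direction--dependent factor $V_\theta^{1/p}=\vol(K_{m+p}\cap H_\theta^+)^{1/p}$ to be controlled, and your route via $K_{m+1}$ (which is exactly centred) together with Corollary~\ref{cor:geoKr} does work for all $p\ge m$: from $cK_{m+1}\subset K_{m+p}\subset C\frac{m+p}{m+1}K_{m+1}$ and Gr\"unbaum for $K_{m+1}$ one gets
\[
\frac{V_\theta}{\vol(K_{m+p})}\ \ge\ c_0\Bigl(\frac{c_1\,m}{m+p}\Bigr)^{m},
\]
and hence, writing $t=m/p\le 1$,
\[
\Bigl(\frac{V_\theta}{\vol(K_{m+p})}\Bigr)^{1/p}\ \ge\ c_0^{1/p}\,c_1^{m/p}\Bigl(\frac{m}{m+p}\Bigr)^{m/p}\ \ge\ c_2\,t^{t}\ \ge\ c_2\,e^{-1/e},
\]
since $t^t\ge e^{-1/e}$ for $t\in(0,1]$. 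So the loose upper inclusion in Corollary~\ref{cor:geoKr} is harmless after taking the $1/p$--th power; you do not need $K_{m+1}$ and $K_{m+p}$ to be close. With this in hand your proposal is complete and equivalent to the paper's.
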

As in \cite{GM},  an important ingredient in the proof of the thin-shell concentration inequality is an estimate from above of the log-Lipschitz constant of the map on
$SO(n)$ :
$u \mapsto h_{k,p}(u)$.  Let ${\cal M}_n(\R)$ be the set of square $n \times n$ matrices. We equip
\[
SO(n) = \{ u \in {\cal M}_n(\R) : u^t u = Id, \mathrm{det}(u) = 1 \}
\]
with its standard invariant Riemannian metric, which we specify for concreteness on $T_{Id} SO(n)$, the tangent space at the identity element
$Id \in SO(n)$. Since $u^t u = Id$, this tangent space may be identified with the set of anti-symmetric matrices $\{ B \in {\cal M}_n(\R) : B^t + B = 0\}$.
We define the scalar product $\langle B, B \rangle =  \frac{1}{2} \mathrm{tr}(B^t B)$ on $T_{Id} SO(n)$.

\begin{prop}
\label{prop:LogLip} Let $n\ge 1$, $r> 10$ and $w$ be the $(-1/(r+n))$-concave density of a probability measure on $\R^n$ such that $\int_{\R^n}xw(x)dx=0$. Let $k$ be an integer such that $k \ge 2$, 
$2k - 1 \le n$ and $2k \le r$. Let $p$ such that  $-\frac{k}{2}\le p\le r-1$.
Denote by $L_{k,p}$ the log-Lipschitz constant of the map on $SO(n)$ : $u \mapsto h_{k,p}(u)$. Then
\[
L_{k,p} \le C \max(k,p) d(Z_{\max(k,p)}^+(w), B_2^n),
\]
where $C$ is a universal constant.
\end{prop}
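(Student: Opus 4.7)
The plan is to express $h_{k,p}(U)$ via Bobkov's extension of Ball's body for the marginal $\pi_F w$, differentiate along $SO(n)$, and then invoke Lemma \ref{lem:Zp}. Writing $F = U(E_0)$ and $\theta = U(\theta_0) \in S(F)$, since $t \mapsto \pi_F w(t\theta)$ is $-1/(r+k)$-concave on $[0,\infty)$, the very definition of the radial function of $K_a(\pi_F w) \subset F$ gives the identity
\[
h_{k,p}(U) = \frac{\vol(S^{k-1})\, \pi_F w(0)}{p+k}\, \rho_{K_{p+k}(\pi_F w)}(\theta)^{p+k}.
\]
The hypothesis $p \le r-1$ ensures $p+k \le r+k-1$, so Bobkov's theorem guarantees that $K_{p+k}(\pi_F w)$ is convex.

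To compute $\nabla_U \log h_{k,p}$, I would parametrize $U_s = e^{sA}U$ with $A \in \mathfrak{so}(n)$ and use the rotational invariance of Lebesgue measure on $F_s^\perp$ to rewrite
\[
h_{k,p}(U_s) = \vol(S^{k-1}) \int_0^\infty t^{p+k-1} \int_{F^\perp} w\!\left(e^{sA}(t\theta + z)\right) dz\, dt.
\]
Differentiating at $s=0$ produces $\frac{d}{ds}\log h_{k,p}(U_s)|_{s=0} = \E_\nu [\nabla \log w(Y) \cdot AY]$, where $\nu$ is the probability measure on the half-plane $\Pi = \R_+ \theta \oplus F^\perp$ with density proportional to $\sk{y,\theta}^{p+k-1} w(y)$. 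Decomposing $A$ with respect to the splitting $\R^n = \R\theta \oplus (F\cap\theta^\perp) \oplus F^\perp$, integration by parts on $\Pi$ combined with the skew-symmetry of $A$ annihilates the blocks that preserve $\Pi$; only the off-diagonal blocks exchanging $F^\perp$ and $F \cap \theta^\perp$ survive, and a Cauchy--Schwarz estimate bounds their contribution by $(p+k)\, d(K_{p+k}(\pi_F w), B_2(F))$.

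To finish, I invoke Lemma \ref{lem:Zp} with $m=k$ and $g = \pi_F w$, which gives
\[
d(K_{p+k}(\pi_F w), B_2(F)) \le c\, d(Z_{\max(k,p)}^+(\pi_F w), B_2(F)),
\]
and then use the identity $Z_q^+(\pi_F w) = Z_q^+(w) \cap F$ (immediate from $\sk{X,\theta}=\sk{P_F X,\theta}$ for $\theta\in F$) to conclude $d(Z_q^+(\pi_F w), B_2(F)) \le d(Z_q^+(w), B_2^n)$. Since $-k/2 \le p \le r-1$ forces $p+k \asymp \max(k,p)$, combining the estimates yields $L_{k,p} \le C\max(k,p)\, d(Z_{\max(k,p)}^+(w), B_2^n)$.

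The main obstacle is controlling the transverse contribution coming from the $F$-perturbing blocks of $A$: the corresponding directional derivative of $w$ is orthogonal to the half-plane $\Pi$ of integration and therefore cannot be removed by a plain integration by parts. Absorbing this term into the geometric factor $d(K_{p+k}(\pi_F w), B_2(F))$ requires a quantitative use of the $-1/(r+n)$-concavity of $w$ together with the convexity of $K_{p+k}$; the dimensional hypotheses $2k+1 \le n$ and $2k+2 \le r$ enter precisely to make Lemma \ref{lem:Zp} and the surrounding Ball-body estimates applicable.
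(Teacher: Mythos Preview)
Your outline captures the broad architecture --- express $h_{k,p}$ via Ball's bodies of a marginal, bound the log-derivative by a Banach--Mazur distance, then reduce to $Z^+$-bodies via Lemma~\ref{lem:Zp} --- but the decisive step is not carried out. You first assert that ``a Cauchy--Schwarz estimate bounds [the surviving] contribution by $(p+k)\, d(K_{p+k}(\pi_F w), B_2(F))$'', and then in the final paragraph concede that absorbing the transverse derivative (the part of $A$ that genuinely rotates $F$) into this same $k$-dimensional factor is ``the main obstacle'', without explaining how. That is precisely the heart of the proposition: the $F$-perturbing block produces an integral of $\partial_v w$ for $v\in F\cap\theta^\perp$ over the half-space $\Pi=\R_+\theta\oplus F^\perp$, and no integration by parts on $\Pi$ touches it. The argument the paper invokes (Theorem~2.1 of \cite{GM}) does \emph{not} control this term by the $k$-dimensional quantity $d(K_{k+p}(\pi_F w),B_2(F))$ alone; it passes through \emph{larger} subspaces and yields the bound
\[
L_{k,p}\ \le\ \max_{m\in\{k,\,k+1,\,2k+1\}}\ \max_{\dim F=m}\ (m+p)\,d\bigl(K_{m+p}(\pi_F w),B_2(F)\bigr).
\]
The hypotheses are calibrated for exactly this: $2k+1\le n$ allows $(2k+1)$-dimensional subspaces to exist, and $2k+2\le r$ is the condition $r\ge m+1$ of Lemma~\ref{lem:Zp} at $m=2k+1$. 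Your application of Lemma~\ref{lem:Zp} with only $m=k$ does not account for the higher-dimensional terms, and your reading of the dimensional hypotheses (that they merely ``make Lemma~\ref{lem:Zp} applicable'' at $m=k$) misses their actual role.

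A secondary correction: the identity you state is wrong as written. Equality of support functions on $F$, namely $h_{Z_q^+(w)}(\theta)=h_{Z_q^+(\pi_Fw)}(\theta)$ for $\theta\in F$, gives $Z_q^+(\pi_Fw)=P_F\bigl(Z_q^+(w)\bigr)$, the orthogonal \emph{projection}, not the \emph{section} $Z_q^+(w)\cap F$. Your conclusion $d(Z_q^+(\pi_Fw),B_2(F))\le d(Z_q^+(w),B_2^n)$ survives, since orthogonal projections do not increase Banach--Mazur distance to the ball, but the route to it needs amending.
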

\begin{proof}
For any subspace $F$ of dimension
$m$, the marginal $\pi_F(w)$ is a $(-1/(r+m))$-concave function on $F$ and from \eqref{def:K_a}, for any $a \in [0, r+m-1]$, we associate the convex body $K_a(\pi_F(w))$ in $F$.  Then the proof of Theorem 2.1 in \cite[section 2.2]{GM}
gives the upper bound:
\[
L_{k,p}\leq \max_F \{ (m+p) \, d(K_{m+p}(\pi_F(w)), B_2(F)) \}
\]
over all subspaces $F$ of dimension $m = k, k+1, 2k-1$, where $B_2(F)$ is the Euclidean unit ball in $F$.  By assumptions on $k$, we get that for these values of $m$,  $m \le 2k - 1\le n$ and $r \ge 2k \ge m+1$ and $p \ge -k /2 \ge -m/2$. Hence from Lemma \ref{lem:Zp}, we have
\[
d(K_{m+p}(\pi_F(w)), B_2(F))  \le c \, d(Z^+_{\max(m,p)}(\pi_F(w)), B_2(F)).
\]
By definition, if $X$ is the random vector with density $w$ on $\R^n$, the marginal
 $\pi_F(w)$ is the density of the projection of $X$
onto $F$, namely $P_F X$. By identification of the support functions, we have that, for any $\theta \in F$,
\[
h_{Z_p^+(\pi_F(w))}^p (\theta)= \E \langle P_FX, \theta \rangle_+^p = \E \langle X, \theta \rangle_+^p.
\]
This means that
$Z_p^+(\pi_F(w)) = P_F(Z^+_p(w))$. Since the distance to the
Euclidean ball cannot increase after projections, we conclude that
 \[
d(K_{m+p}(\pi_F(w)), B_2(F))  \le c d(Z^+_{\max(m,p)}(w), B_2^n).
\]
By equation (\ref{eq:Zp-Zq}), for $m=k,k+1, 2k-1$, one has 
\[
d(Z^+_{\max(m,p)}(w), Z^+_{\max(k,p)}(w))\le c. 
\]
This finishes the proof.
\end{proof}
We define the $q$-condition number of a random vector $X$ to be
\[
\rho_q(X) = \frac{\sup_{|\theta|_2 = 1} \left( \E \sk{X, \theta}_+^q \right)^{1/q}}{\inf_{|\theta|_2 = 1} \left( \E \sk{X, \theta}_+^q \right)^{1/q}}.
\]
Obviously, if $w$ is the density of a full-dimensional random vector $X$ in $\R^n$ then $\rho_q(X) = d(Z^+_q(w), B_2^n)$.
\begin{prop}
\label{prop:rhop}
With the same assumptions as in Proposition~\ref{prop:LogLip},
if a random vector $X$ with density $w$ is isotropic then
\[
L_{k,p} \le C \max(k,p)^2.
\]
More generally if $A$ is such that $AX$ is isotropic then
\begin{equation}
\label{eq:nonisotrope}
L_{k,p} \le C \, \max(k,p)^2 \, \|A\|\|A^{-1}\|.
\end{equation}
\end{prop}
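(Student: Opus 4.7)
Proposition~\ref{prop:LogLip} reduces the desired inequality to showing $\rho_q(X) := d(Z_q^+(w), B_2^n) \le C'q$ in the isotropic case, and $\rho_q(X) \le C'q\,\|A\|\|A^{-1}\|$ in general, where $q := \max(k,p) \ge 1$. I will bound the support function $h_{Z_q^+}(\theta) = (\E \sk{X,\theta}_+^q)^{1/q}$ from above and below uniformly on $S^{n-1}$, exploiting that $L^p$ norms are monotone in $p$ on $[1,\infty)$ (which applies since $q \ge k \ge 1$).

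Assume first that $X$ is isotropic. For the \emph{upper bound}, apply Corollary~\ref{cor:Khinchine} to $\phi = \sk{\cdot,\theta}_+$ from exponent $2$ up to $q$ (when $q \ge 2$; the case $q < 2$ follows from H\"older directly). By Lemma~\ref{lem:fonctionBeta}, $(qB(q,r-q))^{1/q} \asymp q/r$ and $(2B(2,r-2))^{1/2} \asymp 1/r$, so their ratio is $O(q)$; combined with $(\E\sk{X,\theta}_+^2)^{1/2} \le (\E\sk{X,\theta}^2)^{1/2} = 1$ this yields $h_{Z_q^+}(\theta) \le Cq$. For the \emph{lower bound}, the key step is to use $\E X = 0$, which gives $\E\sk{X,\theta}_+ = \E\sk{X,\theta}_- =: a$. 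Applying Corollary~\ref{cor:Khinchine} from exponent $1$ to $2$ to \emph{both} $\sk{\cdot,\theta}_+$ and $\sk{\cdot,-\theta}_+$ (the relevant Beta-ratio is $\sqrt{2(r-1)/(r-2)} \le C_1$ since $r>10$) yields $\E\sk{X,\pm\theta}_+^2 \le C_1^2 a^2$. Summing,
\[
1 \;=\; \E\sk{X,\theta}^2 \;=\; \E\sk{X,\theta}_+^2 + \E\sk{X,\theta}_-^2 \;\le\; 2C_1^2 a^2,
\]
so $a \ge (C_1\sqrt 2)^{-1}$, and monotonicity gives $(\E\sk{X,\theta}_+^q)^{1/q} \ge a \ge c > 0$.

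For the non-isotropic statement, let $Y = AX$ be isotropic (translating $X$ to the barycenter beforehand so that $A$ may be taken linear). The identity $\sk{X,\theta} = \sk{Y, A^{-T}\theta}$ combined with positive $1$-homogeneity gives $h_{Z_q^+(X)}(\theta) = |A^{-T}\theta|\cdot h_{Z_q^+(Y)}(A^{-T}\theta/|A^{-T}\theta|)$, and $|A^{-T}\theta|\in[\|A\|^{-1}, \|A^{-1}\|]$ for $|\theta|=1$. Inserting the isotropic bounds $h_{Z_q^+(Y)}(\hat\phi) \in [c, Cq]$ yields $\rho_q(X) \le C''q\,\|A\|\|A^{-1}\|$ and hence~\eqref{eq:nonisotrope}. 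The \emph{main obstacle} is the isotropic lower bound: a priori the second moment $\E\sk{X,\theta}^2 = 1$ could concentrate almost entirely on one half-space, leaving $\E\sk{X,\theta}_+^2$ negligible. The trick of symmetrizing at the \emph{first}-moment level using $\E X = 0$, and only then upgrading back to $L^2$ through the one-sided reverse H\"older inequality, is essential because Corollary~\ref{cor:Khinchine} requires a nonnegative function concave on its support, a property $|\sk{\cdot,\theta}|$ fails to satisfy globally.
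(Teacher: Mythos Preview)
Your proof is correct and follows the same overall route as the paper: reduce via Proposition~\ref{prop:LogLip} to bounding $\rho_q(X)$ with $q=\max(k,p)$, get the upper bound on $h_{Z_q^+}$ from Corollary~\ref{cor:Khinchine} and Lemma~\ref{lem:fonctionBeta}, and handle the non-isotropic case by the linear equivariance $Z_q^+(AX)=AZ_q^+(X)$ (which you express equivalently through support functions and $A^{-T}$).

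The one point where you go beyond the paper is the lower bound $\inf_{\theta}(\E\sk{X,\theta}_+^q)^{1/q}\ge c$. The paper's proof records only the inequality $\rho_q(X)\le (qB(q,r-q))^{1/q}/(2B(2,r-2))^{1/2}$, which tacitly uses that $\rho_2(X)$ is bounded by a universal constant; this is not automatic from isotropy alone (for a centred variance-one scalar, $\E Y_+^2$ can be arbitrarily small), and one needs the $s$-concave reverse H\"older somewhere. Your symmetrization at the first-moment level ($\E\sk{X,\theta}_+=\E\sk{X,\theta}_-$ from $\E X=0$), followed by Corollary~\ref{cor:Khinchine} applied from $p=1$ to $p=2$ to both half-lines and summing, is exactly the missing step that justifies this constant. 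So your argument is essentially the paper's, with the implicit lower bound made explicit.
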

\begin{proof}
Let $q=\max(k,p)$, then one has $1\le q\le r-1$. Using the triangular inequality we get
\[
\rho_q(X) = d(Z^+_q(w), B_2^n)\le  d(Z^+_q(w), Z^+_2(w))\ d( Z^+_2(w),B_2^n).
\]
From equation (\ref{eq:Zp-Zq}) we deduce that  $d(Z^+_q(w), Z^+_2(w))\le cq$. 
 For any $\theta \in S^{n-1}$, $\E \langle X, \theta \rangle = 0$, hence $\E\langle X, \theta \rangle_+=\E\langle -X, \theta \rangle_+$. Using this equality and equation (\ref{eq:Zp-Zq}) we deduce that 
\[
\left(\E\langle -X, \theta \rangle_+^2\right)^\frac{1}{2}\le c \E\langle -X, \theta \rangle_+=c\E\langle X, \theta \rangle_+\le c\left(\E\langle X, \theta \rangle_+^2\right)^\frac{1}{2}.
\]
Thus 
\[
\E\langle X, \theta \rangle_+^2\le \E\langle X, \theta \rangle^2=\E\langle X, \theta \rangle_+^2+\E\langle -X, \theta \rangle_+^2\le C\ \E\langle X, \theta \rangle_+^2.
\]
Hence if $X$ is isotropic we deduce that $d( Z^+_2(w),B_2^n)\le c'$. We conclude that 
\[
\rho_q(X) = d(Z^+_q(w), B_2^n)\le C'q.
\]
The conclusion follows from Proposition~\ref{prop:LogLip}.
In the general case, notice that $Z_q^+(AX)=AZ_q^+(X)$ and $d(AB_2^n,B_2^n)=\|A\|\|A^{-1}\|$, thus
\[
\rho_q(X)  \le  \rho_q(AX) \|A\|\|A^{-1}\|.
\]
\end{proof}
\begin{proof}[Proof of Theorem \ref{th:moments}]
Without loss of generality, we can assume that  $r > 32$. Indeed, if
$r \le 32$ then the statement in Theorem 1 is valid for $|p| \le c
r$ and it gives only a comparison of $(\E |X|_2^p)^{1/p}$ with $(\E
|X|_2^2)^{1/2}$ up to a constant factor. The result
is a consequence of Theorem 5.2 in \cite{AGLLOPT}.

From now on, we assume that  $r > 32$ and that $|p|\le \frac{r}{8}$.
We start by presenting a complete argument following \cite{Fleury}. This will give a complete proof of Theorem~\ref{th:moments} with a slightly weaker result. In the second part, we just indicate the needed modifications of the argument of \cite{GM} to get the complete conclusion.

In this first part, we will prove that for any $p \in
[\frac{1}{\sqrt n}, \min(cn^{1/8}, \frac{r}{8})]$
\begin{equation}
\label{eq:premierepreuve}
 (\E |X|_2^p \E |X|_2^{-p} )^{1/p} \le  1 +   \frac{C \, p}{r}   + \left(\frac{C p}{n^{1/3}}\right)^{3/5}.
\end{equation}
Assuming \eqref{eq:premierepreuve}, few elementary steps are needed to prove that for any $p$ such that $|p| \le \min(cn^{1/8}, \frac{r}{8})$,
\begin{equation}
\label{eq:approche1}
\left| \frac{(\E |X|_2^p)^{1/p}}{(\E |X|_2^2)^{1/2}} -1 \right| \le   \frac{C (1+ |p|)}{r}   + \left(\frac{C (1+|p|)}{n^{1/3}}\right)^{3/5} ,
\end{equation}
which is already enough to get a thin-shell concentration.
Indeed, for $p \ge 2$,  by H\"older inequality, we have
\[
0 \le \frac{(\E |X|_2^p)^{1/p}}{(\E |X|_2^2)^{1/2}} -1 \le \frac{(\E |X|_2^p)^{1/p}}{(\E |X|_2^{-p})^{-1/p}} -1
\]
and we conclude by \eqref{eq:premierepreuve}. For $p \le -2$, we have $|p| = -p \ge 2$ and from H\"older inequality and \eqref{eq:premierepreuve},
\[
0 \le \frac{(\E |X|_2^2)^{1/2}}{(\E |X|_2^p)^{1/p}} -1 \le \frac{(\E |X|_2^{|p|})^{1/|p|}}{(\E |X|_2^{-|p|})^{-1/|p|}} -1
\le \frac{C \, |p|}{r}   + \left(\frac{C |p|}{n^{1/3}}\right)^{3/5}.
\]
An elementary computation shows that
\[
\left| \frac{(\E |X|_2^p)^{1/p}}{(\E |X|_2^2)^{1/2}} -1 \right| \le   \frac{C|p|}{r}   + \left(\frac{C|p| }{n^{1/3}}\right)^{3/5} .
\]
For $p \in [-2, 2]$, by H\"older inequality, we have 
\[
0 \le 1 -\frac{(\E |X|_2^p)^{1/p}}{(\E |X|_2^2)^{1/2}} \le 1 - \frac{(\E |X|_2^{-2})^{-1/2}}{(\E |X|_2^2)^{1/2}}
\]
and we conclude by the previous estimate for $p = -2$. This concludes the proof of \eqref{eq:approche1}.

Let us start the proof of \eqref{eq:premierepreuve}. Let $p \in
[\frac{1}{\sqrt n}, \min(cn^{1/8}, \frac{r}{8})]$ and 
$k$ be an integer greater or equal than 2 such that $p < k \le n$. We will optimize the choice of $k$ at the end of the proof.
Recall that by $(\ref{eq:so1})$,
\[
\E |X|_2^p = \frac{\Gamma((p+n)/2)\Gamma(k/2)}{\Gamma(n/2)\Gamma((p+k)/2)} \E h_{k,p}(U),
\]
where $U$ is uniformly distributed on $SO(n)$. Using that the
function $\frac{d}{dp} \log \Gamma(p)$ is concave (see for
example the proof of Lemma \ref{lem:fonctionBeta} in the appendix),
we deduce that
\begin{equation}
\label{eq:logG}
\frac{d}{dp} \left( \frac{1}{p} \log \frac{\Gamma((p+n)/2) \Gamma(k/2)}{\Gamma((p+k)/2) \Gamma(n/2)} \right) \le 0.
\end{equation}
It follows that for any $0<p<k$,
\[
\frac{\Gamma((p+n)/2)\Gamma(k/2)}{\Gamma(n/2)\Gamma((p+k)/2)} \ \frac{\Gamma((-p+n)/2)\Gamma(k/2)}{\Gamma(n/2)\Gamma((-p+k)/2)}
\le 1.
\]
Then for all $0<p<r$ and $n \ge k > p$ we have
\begin{equation}
 \label{eq:start1}
\E |X|_2^p \E |X|_2^{-p}
 \le
\E h_{k,p}(U) \E h_{k,-p}(U).
\end{equation}
Applying log-Sobolev inequality $(\ref{eq:reverseLogSob})$ to $h_{k,p}$ and $h_{k,-p}$ we get
\begin{equation}
\label{eq:reverseHolder0}
\E h_{k,p}(U)^2  \le  e^{\frac{c \, L_{k,p}^2}{n}} \  \left(\E h_{k,p}(U) \right)^2,
\
\E h_{k,-p}(U)^2  \le  e^{\frac{c \, L_{k,-p}^2}{n}} \  \left(\E h_{k,-p}(U) \right)^2.
\end{equation}
Since $\Var f = \E f^2- (\E f)^2$ we deduce that
\begin{equation}
\label{eq:reverseHolder1}
\left\{
\begin{array}{ll}
\Var \, h_{k,p} (U) \le  \left( e^{\frac{c \, L_{k,p}^2}{n}} -1 \right) \,  \left(\E h_{k,p}(U) \right)^2 , \,
\\
\Var \, h_{k,-p} (U) \le  \left( e^{\frac{c \, L_{k,-p}^2}{n}}-1 \right) \,  \left(\E h_{k,-p}(U) \right)^2.
\end{array}
\right.
\end{equation}
By Corollary \ref{cor:logconcavity}, we know that $p \mapsto h_{k,p}(u) / B(k+p, r-p)$ is log-concave on $[-k+1, r)$ hence
\[
h_{k,p}(u) \, h_{k,-p}(u) \le \left( \frac{B(k+p,r-p)}{B(k,r)} \frac{B(k-p, r+p)}{B(k,r)} \right) \ h_{k,0}^2(u).
\]
Taking the expectation with respect to $SO(n)$, we get that
\[
\E h_{k,p}(U) h_{k,-p}(U) \le \left( \frac{B(k+p,r-p)}{B(k,r)} \frac{B(k-p, r+p)}{B(k,r)} \right) \E h_{k,0}^2(U).
\]
Since  $\E h_{k,0}(U) = 1$ we deduce from $(\ref{eq:reverseHolder0})$ that
\[
\E h_{k,0}^2(U) \le e^{\frac{c \, L_{k,0}^2}{n}}.
\]
Assume that $k$ is such that  $k \le r$ then
by $(\ref{eq:Beta2})$, we know that for $p \le  (k-1)/2$,
\[
\left( \frac{B(k+p,r-p)}{B(k,r)} \frac{B(k-p, r+p)}{B(k,r)} \right)^{1/p} \le e^{2 p \left( \frac{1}{k-1} + \frac{1}{r-1} \right)}
\le
e^{4 p \left( \frac{1}{k} + \frac{1}{r} \right)}
\]
since $k, r \ge 2$. 
Hence
\begin{equation}
\label{eq:1}
\E h_{k,p}(U) h_{k,-p}(U) \le e^{\frac{c \, L_{k,0}^2}{n} + 4 p^2 \left(\frac{1}{k} + \frac{1}{r}\right) }.
\end{equation}

Moreover
\begin{align}
\nonumber
\E h_{k,p}(U) \,  h_{k,-p}(U)   = &  \,
\E h_{k,p}(U) \  \E h_{k,-p}(U)  + \Cov(h_{k,p}(U), h_{k,-p}(U))
\\
\nonumber
\ge  \E h_{k,p}(U) \, \E h_{k,-p}(U) & - \sqrt{\Var \, h_{k,p}(U) \ \Var \, h_{k,-p}(U)}
\\
\label{eq:ma1}
\ge  \E h_{k,p}(U) \, \E h_{k,-p}(U) &
\left( 1 -  \sqrt{  \left( e^{\frac{c \, L_{k,p}^2}{n}} -1 \right) \left( e^{\frac{c \, L_{k,-p}^2}{n}} -1 \right)} \right)
\end{align}
where the last inequality follows from $(\ref{eq:reverseHolder1})$.
Assume moreover that $k$ is such that  
$2k - 1 \le n$ and $2k \le r$ then for $p \le (k-1)/2$, we can evaluate $L_{k,p}$, $L_{k,-p}$ and $L_{k,0}$ from Proposition \ref{prop:rhop}  since the assumptions are fulfilled. We get  that if $X$ is isotropic then $\max(L_{k,p}, L_{k,-p}, L_{k,0}) \le C k^2$. If $k \le c_0 n^{1/4}$ for a small enough numerical constant $c_0$,  we have
\[
 \sqrt{  \left( e^{\frac{c \, L_{k,p}^2}{n}} -1 \right) \left( e^{\frac{c \, L_{k,-p}^2}{n}} -1 \right)}
\le  c' \ \frac{k^4}{n} \le \frac{1}{10}.
\]
Combining this estimate with \eqref{eq:ma1} and
$(\ref{eq:1})$, we have proved that if $k$ is an integer such that $k \ge 2$, 
$2k - 1 \le n$, $2k \le r$, $k \le c_0 n^{1/4}$ and $2p +1\le k$  (this set of integers is not empty since $r > 32$ and $p \le r/8$) then
\[
\E h_{k,p}(U) \, \E h_{k,-p}(U)  \le \frac{e^{4 p^2 \left( \frac{1}{k} + \frac{1}{r}\right) + c \frac{k^4}{n}}}{1 - c' \ \frac{k^4}{n}} \le e^{4 p^2 \left( \frac{1}{k} + \frac{1}{r}\right) + C \frac{k^4}{n}}.
\]
For $p \le 1$, we also force $k$ to satisfy $k \le C_0 p^{1/4} n^{1/4}$. Hence taking the power $1/p$ in the last expression,
we conclude from $(\ref{eq:start1})$ that
\[
(\E |X|_2^p \E |X|_2^{-p} )^{1/p}  \le  e^{4 p \left(\frac{1}{k} + \frac{1}{r}\right) + C \frac{k^4}{pn} } \le
1 + c p \left(\frac{1}{k} + \frac{1}{r}\right) + c \frac{k^4}{pn}, 
\]
since $p/k, p/r$ and $k^4/pn$ are bounded by universal constants.
It remains to optimize the choice of $k$.
Let $p_0 = n^{-1/2}$. In this case  we choose $k=2$ and get
\begin{equation}
\label{eq:smallp}
(\E |X|_2^{p_0} \E |X|_2^{-p_0} )^{1/p_0} \le 1   +\frac{C}{\sqrt n} .
\end{equation}
If $p \ge n^{-1/2}$ we  choose $k$ to be an integer such that
$\min(r/4,(p^2n)^{1/5}) \le k \le 2 \min(r/4,(p^2n)^{1/5})$ with the restriction
$2p +1 \le k \le c n^{1/4}$ and that $k \le c p^{1/4} n^{1/4}$. 
For any $p$ such that $p_0 \le p \le \min(c \, n^{1/8},
r/8)$,  the integer $k$ satisfies $k \ge 2$, 
$2k - 1 \le n$, $2k \le r$, $k \le c_0 n^{1/4}$ and $2p +1\le k$ and we get that 
\[
 (\E |X|_2^p \E |X|_2^{-p} )^{1/p} \le  1 +   \frac{C \, p}{r}   + \left(\frac{C p}{n^{1/3}}\right)^{3/5} .
\]
This ends the proof of \eqref{eq:premierepreuve}.

In the second part, we follow the argument developed in \cite{GM} to
get a better estimate. We deal now with the case of $p$ being
positive or negative and, as already said, we can assume without
loss of generality that $r > 34$ and $|p|\le r/8$. As in
\cite{GM}, our goal is to estimate
\[
\frac{d}{dp} \log( (\E |X|_2^p)^{\frac{1}{p}} ) = \frac{d}{dp} \log((\E h_{k,p}(U))^{\frac{1}{p}}) + \frac{d}{dp} \left(\frac{1}{p}  \log \frac{\Gamma((p+n)/2)\Gamma(k/2)}{\Gamma(n/2)\Gamma((p+k)/2)}\right) ~.
\]
Most of the computation of section 3.2 in \cite{GM} is identical.
All the ingredients needed for the proof have been established and,
adapting the argument done in section 3.2 in \cite{GM}, we get
\begin{equation}
\label{eq:aprouver2}
 \frac{d}{dp} \log( (\E |X|_2^p)^{\frac{1}{p}} )
\le \frac{c}{p^2 n} (2 L_{k,p}^2 + 3 L_{k,0}^2) + \frac{C}{k-1} +
\frac{C}{r-1}.
\end{equation}
For convenience of the reader, we will shortly reproduce the proof of \eqref{eq:aprouver2} 
in the appendix.

Assume that $X$ is isotropic. For any $2 |p|\le k \le r/2$ (this set of integers is not empty since $r > 32$ and $|p| \le r/8$),  we
know by Proposition \ref{prop:rhop}, that $L_{k,p}$ and $L_{k,0}$ are smaller than $C k^2$. We get that 
\[
\frac{d}{dp} \log( (\E |X|_2^p)^{\frac{1}{p}} )  \le C \, \left(\frac{k^4}{p^2 n} + \frac{1}{k} + \frac{1}{r} \right) .
\]
We have to minimize this expression for $k$ being an integer greater
or equal than 2 and $k \in [2|p|, r/2]$. For $|p| \in [n^{-1/2},
c n^{1/3}]$, we set $k$ being an integer such that $\min(r/4, 2
(p^2 n)^{1/5}) \le k \le 2\min(r/4, 2 (p^2 n)^{1/5})$. Therefore
$k$ satisfies the restrictions and we get for any $p$ such that
$|p| \in [n^{-1/2}, c n^{1/3}]$,
\begin{equation}
\label{eq:derivee}
\frac{d}{dp} \log( (\E |X|_2^p)^{\frac{1}{p}} )  \le C \, \left(\frac{1}{(p^{2} n)^{1/5}} + \frac{1}{r} \right).
\end{equation}
After integration over $p$,  we get that  for all $p \in [n^{-1/2}, c \min(r,n^{1/3})]$
\[
\left| \log \frac{(\E |X|_2^p)^{1/p}}{(\E |X|_2^2)^{1/2}}  \right|
\le
 \frac{C\, |p-2|}{r} + \frac{C \, |p^{3/5}-2^{3/5}|}{n^{1/5}} .
\]
Since $|p^{3/5} - 2^{3/5} |\le |p-2|^{3/5}$ and all terms in the right hand side of the inequality are bounded by a universal constant,
we conclude by adjusting 

 that 
\[
\left| \frac{(\E |X|_2^p)^{1/p}}{(\E |X|_2^2)^{1/2}} -1 \right|
\le   \frac{C\, |p-2|}{r} + \left(\frac{C \, |p-2|}{n^{1/3}}\right)^{3/5},
\quad
\forall p \in [n^{-1/2},  c \min(r,n^{1/3})].
\]
Since \eqref{eq:derivee} holds only for $|p|\geq n^{-1/2}$, we use $(\ref{eq:smallp})$  to bridge the gap between $-n^{-1/2}$ and $n^{-1/2}$. Indeed, from
$(\ref{eq:smallp})$, the previous inequality for $p_0 = n^{-1/2}$ and using that $|p_0-2|=2-p_0\le 2$,   we get that for $p \in [-p_0, p_0]$,
\begin{align*}
(\E |X|_2^p)^{1/p} \ge (\E |X|_2^{-p_0})^{-1/p_0}
& \ge \frac{1}{1 + \frac{C}{\sqrt{n}}} (\E |X|_2^{p_0})^{1/p_0}
\\
& \ge \frac{1 - \frac{2C}{r} - (\frac{2C}{n^{1/3}})^{3/5}}{1 + \frac{C}{n^{1/5}} }
(\E |X|_2^2)^{1/2}.
\end{align*}
An easy adaptation of the constants leads to the conclusion of Theorem \ref{th:moments} for all $p \in [-n^{-1/2}, n^{-1/2}]$.
\\
Integrating again \eqref{eq:derivee}, we get, for $p \in [-c \min(r,n^{1/3}) , -n^{-1/2}],$
\[
\frac{(\E |X|_2^p)^{1/p}}{(\E |X|_2^{-p_0})^{-1/p_0}}\ge 
1- \frac{C\, |p+p_0|}{r} - \left(\frac{C \, |p+p_0|}{n^{1/3}}\right)^{3/5}.
\]
Using that $|p+p_0|\le|p-2|$ and the previous comparison of the moment of order $-p_0$ with the moment of order $2$ and adjusting the constants, this proves that for all $p \in [-c \min(r,n^{1/3}) , -n^{-1/2}],$
\[
\left| \frac{(\E |X|_2^p)^{1/p}}{(\E |X|_2^2)^{1/2}} -1 \right|
\le   \frac{C\, |p-2|}{r} + \left(\frac{C \, |p-2|}{n^{1/3}}\right)^{3/5}.
\]
This concludes the proof of the first part of Theorem \ref{th:moments}.

If $X$ is such that $AX$ is isotropic, we know from Proposition \ref{prop:rhop}  that for any integer $k$ such that $2 |p| \le k \le r/2$,
\[
\max(L_{k,p}, L_{k,0})  \le C k^2 \|A\| \|A^{-1}\|.
\]
The  proof is identical to the previous one replacing $n$ by $\frac{n}{\|A\|^2 \|A^{-1}\|^2}$.
\end{proof}

\begin{remark} \label{final remark}
In \cite{GM}, a preprocessing step consisted  in adding a Gaussian isotropic vector to the random vector $X$  in order to start at the very beginning with a better information on the $Z_p^+$ bodies associated to the measure. In \cite{Klartag2, Fleury}, this convolution argument played a role of regularization. It is natural to ask if such a process could be done in the situation of $s$-concave measure. Nothing is doable by adding a Gaussian vector because for $s < 0$, the new vector does not belong to any class of $s$-concave vectors.  However, for $r > n$, we can build a similar argument,  adding to $X$ a random vector $Z$ uniformly distributed on the  Euclidean ball, see also \cite{BobMad}. Since $Z$ is $(1/n)$-concave and $X$ is $(-1/r)$-concave, the new vector $Y = \frac{X+Z}{\sqrt 2}$ will be
$(- 1/(r-n))$-concave. For any $p \ge 1$, we have (see inequality $(4.7)$ in \cite{GM})
\[
\alpha_p(X) \le \alpha_{2p}(Y) \left( 2 + \alpha_{2p}(Y) \right)
\]
so that it remains to bound $\alpha_{2p}(Y)$. It is easy to see that $Y$ is such that for every $q \ge 2$ and every $\theta \in S^{n-1}$,
$\left(\E \sk{Y, \theta}_+^q \right)^{1/q} \ge c \sqrt q$.  Adapting the proof of Proposition \ref{prop:rhop}, we get  $L_{k,p} \le C \max(k,p)^{3/2}$. As in \cite{GM}, this improvement leads to the following estimate: if $r-n > 2$, then for any $p$ such that $1 \le p \le c \min(r-n, \sqrt n)$
\[
\alpha_{2p} (Y) \le  \frac{C(2p-2)}{r-n} +  \left( \frac{C(2p-2)}{\sqrt n} \right)^{1/2}.
\]
For $r > n + \sqrt n$, we recover the same thin-shell concentration as in the log-concave case.  It would be interesting to understand
in which precise sense the $s$-concave measures are close to the log-concave measures for
$s \in (-1/n, 1/n)$.
Another question is to know
what kind of preprocessing argument like in \cite{KlartagMilman} would enable to recover the small ball estimates from \cite{AGLLOPT}.
\end{remark}

\section{Appendix}
\label{sec:appendix}

\begin{proof}[Proof of Theorem \ref{th:logconcave}]
(1) This result is classical. In the symmetric case, it follows from Lemma 2.1 in \cite{MP}. The general case is similar. We provide its proof for completeness. We may assume, without loss of generality, that $\|f\|_\infty=1$. Denote  $I_p(f)=\int_0^{+\infty} t^{p-1} f(t) dt$. From H\"older inequality, the function $p\mapsto\log (I_p(f))$ is convex on its convex support, thus the domain of definition of $I_p(f)$ is an interval. Let $0<p<q$ be fixed such that $I_p(f)<+\infty$ and $I_q(f)<+\infty$. Let $a=(pI_p(f))^{1/p}$ and $\varphi(t)=t^{p-1}(f(t)-1_{[0,a]}(t))$. Notice that $\varphi\le 0$ on $[0,a]$, $\varphi\ge 0$ on $[a,+\infty)$ and $\int_0^{+\infty}\varphi(t)dt=0$. Thus
\[
I_q(f)-I_q(1_{[0,a]})=\int_0^{+\infty}t^{q-p}\varphi(t)dt=\int_0^{+\infty}(t^{q-p}-a^{q-p})\varphi(t)dt\ge 0,
\]
since the integrand is non negative on $\R_+$. We conclude that
\[
I_q(f)\ge I_q(1_{[0,a]})=\frac{a^q}{q}=\frac{1}{q}\left(pI_p(f)\right)^\frac{q}{p}.
\]

(2) Since $f$ is $(-1/\al)$-concave, there exists a convex function $\varphi: [0,\infty) \to (0,\infty)$ such that $f=\varphi^{-\alpha}$. Since  $f$ is integrable  it follows that $\varphi$ tends to $+\infty$ at $+\infty$. From the convexity of $\varphi$, one deduces that for some constant $c>0$, $\varphi(t)\ge c(1+t)$. Thus $f(t)\le (c+ct)^{-\alpha}$, for every $t\ge 0$.
Therefore, $t^{p-1}f$ is integrable for every $p<\alpha$, which means that $H_f(p)<+\infty$ for every $0< p<\alpha$.
Let $p \in (0, \al)$ and $m, M >0$. Define $g : \R_+ \to \R_+$  by $g(t) =  m  \left( 1 + \frac{t}{M} \right)^{-\al}$.
Then
\[
\int_0^{+\infty} t^{p-1} g(t) dt = m M^p \int_0^{+\infty} v^{p-1} (1+v)^{-\al}dv =   m M^p  B(p, \al-p).
\]
Thus $H_g(p)=mM^p$, which implies that $\log(H_g)$ is affine on $(0, \al)$.
Take $0<a<b<c<\al$. Let $\lambda\in[0,1]$ be such that $b=(1-\lambda)a+\lambda c$. Choose $m$ and $M$ such that
$m M^a = H_f(a)$  and $m M^b  = H_f(b)$ so that $H_g(a)=H_f(a)$ and $H_g(b)=H_f(b)$.
If we prove that
\begin{equation}
\label{eq:technical}
    \int_{0}^{+\infty} t^{c-1} (g-f)(t)  dt \geq 0,
\end{equation}
that is $H_g(c)\ge H_f(c)$, then using that $\log(H_g)$ is affine, we will  deduce that
\[
    H_f(b) = H_g(b) = H_g(a)^{1-\lambda} H_g(c)^{\lambda} \geq H_f(a)^{1-\lambda} H_f(c)^{\lambda}
\]
and this will prove the log-concavity of $H$ on $(0,\alpha)$. If $f=g$ then (\ref{eq:technical}) is satisfied so that in the following we assume that the function  $h:=g-f\not\equiv 0$. Let
\[
    H_1(t) = \int_t^{+\infty} s^{a-1} h(s) ds
\quad \hbox{ and} \quad
    H_2(t) = \int_t^{+\infty}  s^{b-a-1} H_1(s) ds.
\]
Since $h(t)=O(t^{-\alpha})$ at infinity, we deduce that $H_1(t)=O(t^{a-\alpha})$ and $H_2(t)=O(t^{b-\alpha})$.
We have $\int_0^{+\infty} {t^{a-1} h(t)} dt  = 0$
thus $H_1(\infty) = H_1(0) = 0.$
Obviously $H_2(\infty)=0$. We also observe
\begin{align*}
    0 & =  \int_{0}^{+\infty}{t^{b-1} h(t)}{dt} = \int_{0}^{+\infty}{t^{b-a} t^{a-1} h(t)}{ dt}= - \int_{0}^{+\infty}{t^{b-a} H_1'(t)}{dt}  \\
    & =[t^{b-a}H_1(t)]_0^{+\infty}+ (b-a) \int_{0}^{+\infty}{t^{b-a-1}H_1(t)}{ dt}= (b-a) H_2(0),
\end{align*}
whence $H_2(\infty)=H_2(0)=0$. Since  $\int_{0}^{+\infty}{t^{b-a-1}H_1(t)}{ dt}=0$ and $H_1\not\equiv 0$,
the function $H_1$ has at least one change of sign.
Moreover, using that $H_1(0)=H_1(\infty)=0$, we deduce that $H_1'$ and therefore $h$ has at least two sign changes. Since $h=g-f$ has the same sign as  $f^{-\al}-g^{-\al}$ which is convex, it cannot have more than two sign changes. Thus it has exactly two sign changes at some $0<t_1<t_2$. Moreover, from the convexity of $f^{-\al}-g^{-\al}$, the sign of $h$ has to be negative on $(t_1,t_2)$ and positive on $(0,t_1)$ and $(t_2,+\infty)$.
From an easy study of the function $H_2$, we deduce that
%
%\bigskip
%\noindent
%\begin{tikzpicture}
%\tkzTabInit{$t$/1,$h(t)$/1,$H_1'(t)$/1, $H_1(t)$/2, $H_2'(t)$/1, $H_2(t)$/2}{$0$,$t_1$, ,$t_2$, $+\infty$}
%\tkzTabLine{,+, z, ,- , ,z, +}
%\tkzTabLine{,-, z, ,+ , ,z, -}
%\tkzTabVar{+/$0$,-/, R/, +/, -/$0$}
%\tkzTabVal{2}{4}{0.5}{}{$0$}
%\tkzTabLine{, ,+, ,z , , -}
%\tkzTabVar{-/$0$,R/, +/, R/, -/$0$}
%\end{tikzpicture}
%\\
$H_2 \geq 0$. Therefore, using that $H_1(0)=H_1(\infty)=H_2(0)=H_2(\infty)=0$, we get
\begin{align*}
    \int_{0}^{+\infty}{t^{c-1} h(t)}{dt} & = \int_{0}^{+\infty}{t^{c-a}t^{a-1} h(t)}{dt} = -\int_{0}^{+\infty}{t^{c-a}H_1'(t)}{dt} \\
    &= [-t^{c-a}H_1(t)]_0^{+\infty}+ (c-a) \int_{0}^{+\infty}  {t^{c-a-1}H_1(t)}{dt}  \\
    &= (c-a) \int_{0}^{+\infty}{t^{c-b}t^{b-a-1}H_1(t)}{dt} \\
    &= (c-a)[-t^{c-b}H_2(t)]_0^{+\infty}+ (c-a)(c-b) \int_{0}^{+\infty}{t^{c-b-1}H_2(t)}{dt}\\
    &= (c-a)(c-b) \int_{0}^{+\infty}{t^{c-b-1}H_2(t)}{dt} \geq 0.
\end{align*}
This proves $(\ref{eq:technical})$ and establish the log-concavity of $H_f$ on $(0,\alpha)$. To get it on $[0,\alpha)$, it is enough to prove that $H_f$ is continuous at $0$. This follows from the observation that
\[
B(p,\al-p)   \mathrel{\mathop{\sim}\limits_{p\to0}} \Gamma(p)
\mathrel{\mathop{\sim}\limits_{p\to0}}\frac{1}{p}
\quad \hbox{ thus} \quad
 H_f(p)\mathrel{\mathop{\sim}\limits_{p\to0}}p\int_0^{+\infty}t^{p-1}f(t)dt.
\]
And it is classical that, for a continuous function $f$, the right hand side term tends to $f(0)$ when $p\to 0$.
\end{proof}

\begin{proof}[Proof of Lemma \ref{lem:fonctionBeta}]
Equation $(\ref{eq:Beta1})$ follows easily from the classical bounds for the Gamma function (see \cite{Artin}), valid for $x\ge 1$:
\[
\sqrt{2\pi}x^{x-\frac{1}{2}}e^{-x} \le \Gamma(x) \le \sqrt{2\pi}x^{x-\frac{1}{2}}e^{-x+\frac{1}{12}}.
\]
For equation $(\ref{eq:Beta2})$, we write that
\[
\frac{B(k+p, r-p)}{B(k,r)} = \frac{\Gamma(k+p) \Gamma(r-p)}{\Gamma(k) \Gamma(r)}.
\]
Denote $G(p)=\log \Gamma(p)$, for $p> 0$. We know that $G''(p)=\sum_{i\ge 0}1/(p+i)^2$ hence $G''$ is non-increasing and  $0 \le G''(p)\le 1/(p-1)$, for $p>1$.
Denote $F_k(p)=\frac{G(k+p)-G(k)}{p}$, for $k>0$ and $p>-k$. We have $F_k(p)=\int_0^1G'(k+up)du$. Using that  $G''$ is non-increasing,
we get that for $k>1$ and $p\ge -(k-1)/2$,
\[
F_k'(p)=\int_0^1G''(k+up)udu\le G''\left(\frac{k+1}{2}\right) \int_0^1 udu=\frac{1}{2}G''\left(\frac{k+1}{2}\right)\le \frac{1}{k-1}
\]
and $F_k'(p)\ge 0$. Therefore, if $k>1$, $r>1$ and $-\frac{k-1}{2}\le p\le \frac{r-1}{2}$ then
\begin{eqnarray*}
0\le\frac{d}{dp} \left( \frac{1}{p} \log \frac{B(k+p, r-p)}{B(k,r)} \right)
& = &\frac{d}{dp} (F_k(p)-F_r(-p))
\\
& = & F_k'(p)+F_r'(-p)\le \frac{1}{k-1}+\frac{1}{r-1}.
\end{eqnarray*}
\end{proof}

\begin{proof}[Proof of Lemma \ref{lem:Zp}]
We present here a similar proof than in the appendix of \cite{GM}.
Applying Corollary \ref{cor:geoKr} to $w=g$, $n=m$, $\alpha=r+m$,  we deduce that, for $\frac{m}{2}\le a\le b\le r+m-1$, one has
\[
\left(\frac{g(0)}{\|g\|_{\infty}}\right)^{\frac{1}{a} - \frac{1}{b}} K_a(g) \subset K_b(g) \subset \frac{(bB(b, r+m -b))^{1/b}}{(aB(a, r+m - a))^{1/a}} \ K_a(g).
\]
From Lemma \ref{lem:fonctionBeta}, we have
\[
\frac{(bB(b, r+m -b))^{1/b}}{(aB(a, r+m - a))^{1/a}}
\le c \frac{b}{a} 
\]
Moreover since $\int xg(x)dx=0$, from Lemma~7.2 of \cite{AGLLOPT}, one has
\[
\frac{g(0)}{\|g\|_\infty}\ge\left(\frac{r-1}{r+m-1}\right)^{r+m} \ge e^{-2m}.
\]
Using that $\frac{1}{a}-\frac{1}{b}\le \frac{1}{a}\le \frac{2}{m}$, we deduce that 
$\left(\frac{g(0)}{\|g\|_\infty}\right)^{\frac{1}{a}-\frac{1}{b}} \ge e^{ - 4}$. We conclude that for $\frac{m}{2}\le a\le b\le r+m-1$, one has
\begin{equation}
\label{eq:Ka-Kb}
e^{-4} K_a(g) \subset K_b(g) \subset c\frac{b}{a} \ K_a(g).
\end{equation}
By integration in polar coordinates, it is well known \cite{Paouris1} (see also \cite{GuedonPologne}) that we have the following relation between the $Z_q^+$-bodies associated with $g$ and the $Z_q^+$-bodies associated with one of the convex bodies $K_a(g)$: for any $ 0< q < r$
\begin{equation}
\label{eq:egalite1}
Z_q^+(g)  =  g(0)^{1/q} Z_{q}^+(K_{m+q}(g)),
\end{equation}
where for any body $K$, $Z_q^+(K)$ denotes the convex body whose support function is defined by 
\[\forall \theta \in \R^m,  \quad
h_{Z_q^+(K)}(\theta)= \left(\int_{K}{}{\sk{x,\theta}_+^q}{dx}\right)^\frac{1}{q}.
\]
Let $\theta \in \R^m$ and $K$ be a convex body containing $0$. From Berwald's inequalities \cite{Berwald} applied to $K \cap \{\sk{x,\theta} \ge 0\}$ and the function $x\mapsto\langle x,\theta\rangle_+$ which is concave on $K \cap \{\sk{x,\theta} \ge 0\}$, the function
\[
p\mapsto
\left(\frac{\int_{K}{}{\sk{x,\theta}_+^p}{dx}}{mB(p+1,m)\vol(K \cap \{\sk{x,\theta} \ge 0\})}\right)^\frac{1}{p}
\]
is decreasing.
Observe that for every $\theta\in\R^m$,
$\lim_{p\to\infty}\left(\int_{K}{}{\sk{x,\theta}_+^p}{dx}\right)^\frac{1}{p}=h_K(\theta)$ and that
\[
(mB(p+1,m))^\frac{1}{p}= \left(m\int_0^1u^p(1-u)^{m-1}du\right)^\frac{1}{p}\mathop\to_{p\to+\infty}1.
\]
We deduce that
\[
\left(\frac{\int_{K}{}{\sk{x,\theta}_+^q}{dx}}{mB(q+1,m)\vol(K \cap \{\sk{x,\theta} \ge 0\})}\right)^\frac{1}{q}\ge h_K(\theta).
\]
Note also that $\int_{K}{}{\sk{x,\theta}_+^q}{dx}\le h_K(\theta)^q\vol(K \cap \{\sk{x,\theta} \ge 0\})$ and that
$mB(q+1,m)=qB(q,m+1)$. Therefore 
\begin{equation}
\label{eq:mainZp}
h_K(\theta)
\ge
\frac{h_{Z_q^+(K)}(\theta)}{\vol(K \cap \{\sk{x,\theta} \ge 0\})^{1/q}}
\ge
\left( qB(q,m+1) \right)^{1/q}  h_K(\theta).
\end{equation}
Now we establish that for $q = \max(p,m)$
\begin{equation}
\label{eq:mainZpKp}
d(K_{m+q}(g), Z_{q}^+(g)) \le c.
\end{equation}
Take  $K=K_{m+q}(g)$. By Lemma \ref{lem:fonctionBeta},  for any $q \ge m \ge 1$, $\left( q B(q, m+1) \right)^{1/q} \ge c q /(m+q+1) \ge c/3$ and we deduce from $(\ref{eq:mainZp})$ that for every 
$\theta \in \R^n$, 
\[
h_{K_{m+q}(g)} (\theta)
\ge
\frac{h_{Z_q^+(K_{m+q}(g))}(\theta)}{\vol(K_{m+q}(g) \cap \{\sk{x,\theta} \ge 0\})^{1/q}}
\ge
\frac{c}{3} \ h_K(\theta).
\]
where $c$ is a universal constant. Together with  $(\ref{eq:egalite1})$, we conclude that
\begin{eqnarray}
\nonumber
d(K_{m+q}(g), Z_q^+(g))=d(K_{m+q}(g), Z_q^+(K_{m+q}(g)))
\\
\label{equ: inegalite first part}
\le c 
\frac{\sup_{\theta \in \R^n} \vol(K_{m+q}(g) \cap \{\sk{x,\theta} \ge 0\})^{1/q}}{\inf_{\theta \in \R^n} \vol(K_{m+q}(g) \cap \{\sk{x,\theta} \ge 0\})^{1/q}}
\end{eqnarray}
for a universal constant $c$. Applying (\ref{eq:Ka-Kb}) for $a=m+1$ and $b=m+q$, we get 
\[
e^{-4} K_{m+1}(g) \subset K_{m+q}(g) \subset c\frac{m+q}{m+1} \ K_{m+1}(g).
\]
 Since 
$q \ge m$ and $\left(\frac{m+q}{m+1}\right)^{m/q} \le e$, we get from \eqref{equ: inegalite first part}
\[
d(K_{m+q}(g), Z_q^+(g))\le C \
\frac{\sup_{\theta \in \R^n} \vol(K_{m+1}(g) \cap \{\sk{x,\theta} \ge 0\})^{1/q}}{\inf_{\theta \in \R^n} \vol(K_{m+1}(g) \cap \{\sk{x,\theta} \ge 0\})^{1/q}}
\]
for a universal constant $C$. 
Since $g$ has its barycenter at the origin then $K_{m+1}(g)$ has also its barycenter at the origin and we deduce from a classical result of Gr\"unbaum \cite{Grunbaum} that there exists a universal constant $c$ for which
\[
\frac{\sup_{\theta \in \R^n} \vol(K_{m+1}(g) \cap \{\sk{x,\theta} \ge 0\})^{1/q}}{\inf_{\theta \in \R^n} \vol(K_{m+1}(g) \cap \{\sk{x,\theta} \ge 0\})^{1/q}} \le (e-1)^{1/q} \le e-1.
\]
And \eqref{eq:mainZpKp} is proved.

To conclude the proof of the Lemma, it is enough to establish that $d(K_{m+q},K_{m+p})\le c$, where $q=\max (m,p)$. For $q = p$, this is obvious so we may assume that $q=m\ge p$. Then $\frac{m}{2}\le m+p\le m+q=2m$ and using equation (\ref{eq:Ka-Kb}) for $a=m+p\le b=2m$, we deduce that
\[
d(K_{m+p}(g), K_{2m}(g)) \le ce^4 \frac{2m}{m+p}\le 4ce^4.
\]
\end{proof}

\begin{proof}[Proof of inequality \eqref{eq:aprouver2}]
Our goal is to estimate
\[
\frac{d}{dp} \log\left( \left(\E |X|_2^p\right)^{\frac{1}{p}} \right) = \frac{d}{dp} \log\left(\left(\E h_{k,p}(U)\right)^{\frac{1}{p}}\right) + \frac{d}{dp} \left(\frac{1}{p}  \log \frac{\Gamma((p+n)/2)\Gamma(k/2)}{\Gamma(n/2)\Gamma((p+k)/2)}\right) ~.
\]
As already mentioned in \eqref{eq:logG}, by concavity of $p \mapsto \frac{d}{dp} \log \Gamma(p)$, we have 
\[
\frac{d}{dp} \left(\frac{1}{p}  \log \frac{\Gamma((p+n)/2)\Gamma(k/2)}{\Gamma(n/2)\Gamma((p+k)/2)}\right)  \le 0.
\]
We use the following convention: let $(\Omega, \mu)$ be a measurable space, for any measurable function $f: \Omega \to \R^+$, we set
\[
\E_\mu(f) = \int f d\mu \quad \hbox{and} \quad
\Ent_\mu (f)  = \E_\mu(f\log f) - \E_\mu (f) \log (\E_\mu(f)).
\]
Let $w$ be the density of the distribution of $X$ on $\R^n$. Since $X$ is $(-1/r)$-concave, $w$ is $(-1/(r+n))$-concave on $\R^n$.
To any fixed $u \in SO(n)$, we associate the measure $\mu_u$ on $\R^+$ with density 
\[
t\mapsto|S^{k-1}| t^{k-1} \pi_{u(E_0)}w(t u(\theta_0))
\]
so that 
\[
h_{k,p}(u) = |S^{k-1}| \int_{0}^\infty t^{p+k-1} \pi_{u(E_0)} w(t u(\theta_0)) dt = \E_{\mu_u} (t^p). 
\]
Define also $\mu_{k,p}$ the measure on $\R^+$ 
with density 
\[
t\mapsto|S^{k-1}| t^{k-1} \E\pi_{U(E_0)}w(t U(\theta_0)).
\]
Then $\E h_{k,p} (U) = \E_U \E_{\mu_U} (t^p) = \E_{\mu_{k,p}} (t^p)$. Since $w$ is a density of probability, $\mu_{k,p}$ is a probability measure on $\R^+$. A classical fact, verified by direct computation, is that
\[
\frac{d}{dp} \log \left( \left(\E_\mu(f^p) \right)^{1/p} \right) = \frac{1}{p^2} \frac{\Ent_\mu(f^p)}{\E_\mu(f^p)}.
\]
Therefore 
\begin{align}
\nonumber
\frac{d}{dp} \log\left(\left(\E h_{k,p}(U)\right)^{\frac{1}{p}}\right) 
& =
\frac{d}{dp} \log\left(\left(\E_{\mu_{k,p}} (t^p)\right)^{\frac{1}{p}}\right) 
\\
\label{eq:decomposition}
& =
\frac{1}{p^2} \frac{\Ent_{\mu_{k,p}}(t^p)}{\E_{\mu_{k,p}}(t^p)}
=
\frac{1}{p^2} \frac{\Ent_{\mu_{k,p}}(t^p)}{\E h_{k,p}(U)}.
\end{align}
The numerator can be decomposed into two terms:
\[
\Ent_{\mu_{k,p}}(t^p) = \E_U \Ent_{\mu_U} (t^p) + \Ent_U \E_{\mu_U} (t^p) 
= \E_U  \Ent_{\mu_U} (t^p) + \Ent_U h_{k,p}(U).
\]
To control the second term, we use the log-Sobolev inequality \eqref{eq:LogSob-vrai}:
\begin{equation}
\label{eq:secondterm}
\frac{1}{p^2} \frac{\Ent_U h_{k,p}(U)}{\E h_{k,p}(U)}
\le \frac{c}{p^2 n} \frac{\E \left( | \nabla \log h_{k,p} |^2 (U) h_{k,p}(U) \right) }{\E h_{k,p}(U)}
\le \frac{c L_{k,p}^2}{p^2 n}.
\end{equation}
To control the first term, we start by observing that for a fixed $u \in SO(n)$, 
\begin{align*}
& \frac{1}{p^2} \frac{ \Ent_{\mu_u} (t^p)}{\E_{\mu_u}(t^p)} 
 = 
\frac{d}{dp} \log \left( \left(\E_{\mu_u}(f^p) \right)^{1/p} \right)  = \frac{d}{dp} \left(\frac{1}{p} \log  h_{k,p}(u)\right) 
\\
& =
\frac{d}{dp}  \frac{1}{p} \left(  \log 
\frac{ h_{k,p}(u)  }{B(p+k, r -p )  } -  \log \frac{ h_{k,0}(u)  }{B(k, r)}  + 
\log \frac{B(p+k, r -p )  }{B(k,r)} + \log h_{k,0}(u) \right).
\end{align*}
By Corollary \ref{cor:logconcavity}, the map
$p \mapsto \frac{ h_{k,p}(u)  }{B(p+k, r -p )}$
is log-concave on $(-k+1, r)$. This implies that 
\[
\frac{d}{dp} \frac{1}{p} \left( \log \frac{ h_{k,p}(u)  }{B(p+k, r
-p)} - \log \frac{ h_{k,0}(u)  }{B(k, r)} \right)  \le 0.
\]
We know from Lemma \ref{lem:fonctionBeta} that, for all $p \in[-\frac{k-1}{2}, \frac{r-1}{2}]$,
\[
\frac{d}{dp} \left( \frac{1}{p} \log \frac{B(k+p, r-p)}{B(k,r)}
\right) \le C \left( \frac{1}{k-1} + \frac{1}{r-1} \right).
\]
Therefore, for any fixed $u \in SO(n)$, 
\[
\frac{1}{p^2}
\Ent_{\mu_u} (t^p)  \le C  h_{k,p}(u)  \left( \frac{1}{k-1} + \frac{1}{r-1} \right) - \frac{1}{p^2} \, h_{k,p}(u) \log h_{k,0}(u).
\]
Integrating over $u \in SO(n)$, we deduce that
\begin{equation}
\label{eq:entropie2}
\frac{1}{p^2} \frac{\E \, \Ent_{\mu_U} (t^p)}{\E h_{k,p}(U)}
\le C  \left( \frac{1}{k-1} + \frac{1}{r-1} \right)  + \frac{1}{p^2} \frac{\E h_{k,p}(U) \log (h_{k,0}(U)^{-1})}{\E h_{k,p}(U)}.
\end{equation}
From Jensen and H\"older inequalities, 
\begin{align*}
\frac{\E (h_{k,p}(U) \log h_{k,0}(U)^{-1})}{\E h_{k,p}(U)}
& \le \log \left(\frac{\E (h_{k,p}(U)  h_{k,0}(U)^{-1})}{\E h_{k,p}(U)}\right)
\\
\le &
\log \left( \frac{(\E h_{k,p}(U)^2)^{1/2}}{\E h_{k,p}(U)} \right) + \log \left( (\E (h_{k,0}(U)^{-2}))^{1/2} \right).
\end{align*}
From \eqref{eq:reverseLogSob}, the first term is upper bounded by $ \frac{c}{n} L_{k,p}^2$. For the second term, we first use \eqref{eq:reverseLogSob} with $f=h_{k,0}^{-1}$, $q=2$ and $r=0$, then we use \eqref{eq:reverseLogSob} again with $f=h_{k,0}$, $q=1$ and $r=0$. Since $\E h_{k,0}(U) = \E_{\mu_{k,0}}(1) = 1$,
we deduce that this term is bounded by $ \frac{3c}{n} L_{k,0}^2$. Combining this last inequality with 
\eqref{eq:entropie2}, \eqref{eq:secondterm} and \eqref{eq:decomposition}, we conclude that 
\[
 \frac{d}{dp} \log( (\E |X|_2^p)^{\frac{1}{p}} )
\le \frac{c}{p^2 n} (2 L_{k,p}^2 + 3 L_{k,0}^2) + \frac{C}{k-1} +
\frac{C}{r-1}.
\]
\end{proof}

\subsection*{Acknowledgements}
The research of the three authors was partially supported by the ANR project GeMeCoD, ANR 2011 BS01 007 01.

{\small

}

\address

\end{document}